\newtheorem{theorem}{Theorem}[section]
\newtheorem{lemma}[theorem]{Lemma}
\newtheorem{corollary}[theorem]{Corollary}
\newtheorem{proposition}[theorem]{Proposition}
\newtheorem{sublemma}{}[theorem]
\theoremstyle{definition}
\theoremstyle{remark}
\numberwithin{equation}{section}
\newcommand{\del}{\backslash}
\DeclareMathOperator{\cl}{cl}
\DeclareMathOperator{\si}{si}
\newsavebox\ideabox
\keywords{theta-graph, binary matroid, parallel connection}
\subjclass[2010]{05B35, 05C40}
\date{\today}
\begin{document}

\title[The Binary $\Theta_3$-closed Matroids]{A binary-matroid analogue of a graph connectivity theorem of Jamison and Mulder}

\author{Cameron Crenshaw}
\address{Mathematics Department\\
Louisiana State University\\
Baton Rouge, Louisiana}
\email{ccrens5@lsu.edu}

\author{James Oxley}
\address{Mathematics Department\\
Louisiana State University\\
Baton Rouge, Louisiana}
\email{oxley@math.lsu.edu}

\begin{abstract}
Jamison and Mulder characterized the set of graphs that can be built from cycles and complete graphs via 1-sums and parallel connections as those graphs $G$ such that, whenever two vertices $x$ and $y$ of $G$ are joined by three internally disjoint paths, $x$ and $y$ are adjacent. This paper proves an analogous result for the set of binary matroids constructible from direct sums and parallel connections of circuits, complete graphs, and projective geometries.
\end{abstract}

\begin{abstract}
Let $G$ be a graph such that, whenever two vertices $x$ and $y$ of $G$ are joined by three internally disjoint paths, $x$ and $y$ are adjacent. Jamison and Mulder determined that the set of such graphs coincides with the set of graphs that can be built from cycles and complete graphs via 1-sums and parallel connections. This paper proves an analogous result for binary matroids.
\end{abstract}

\maketitle

\section{Introduction}
\label{introduction}

Jamison and Mulder~\cite{JM} defined a graph $G$ to be \emph{$\Theta_3$-closed} if, whenever distinct vertices $x$ and $y$ of $G$ are joined by three internally disjoint paths, $x$ and $y$ are adjacent. For disjoint graphs $G_1$ and $G_2$, a 1-\emph{sum} of $G_1$ and $G_2$ is a graph that is obtained by identifying a vertex of $G_1$ with a vertex of $G_2$. Following Jamison and Mulder, we define a \emph{$2$-sum} of $G_1$ and $G_2$ to be a graph that is obtained by identifying an edge of $G_1$ with an edge of $G_2$. Note that, in contrast to some other definitions of this operation, we retain the identified edge as an edge of the resulting graph. The main result of Jamison and Mulder's paper is the following.

\begin{theorem}
\label{JMtheorem}
A connected graph $G$ is $\Theta_3$-closed if and only if $G$ can be built via $1$-sums and $2$-sums from cycles and complete graphs.
\end{theorem}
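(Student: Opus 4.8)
The plan is to prove the two implications separately: the ``if'' direction by a direct check that the building blocks are $\Theta_3$-closed and that $\Theta_3$-closedness survives $1$-sums and $2$-sums, and the ``only if'' direction by induction on $|V(G)|$, with the $2$-connected case as the crux. Recall that in this convention a $2$-sum retains the identified edge.

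For the ``if'' direction, first observe that a cycle is vacuously $\Theta_3$-closed (any two of its vertices are joined by at most two internally disjoint paths) and a complete graph is trivially $\Theta_3$-closed. Now suppose $H$ is built from such blocks and is the $1$-sum $H_1\cdot H_2$ at a vertex $z$, with $H_1,H_2$ already known to be $\Theta_3$-closed; if $x$ and $y$ are joined in $H$ by three internally disjoint paths, then no such path can leave one side and return without repeating $z$, so all three lie in a single $H_i$, forcing $xy\in E(H_i)\subseteq E(H)$. If instead $H$ is the $2$-sum $H_1\oplus_2 H_2$ along $e=uv$, and $x,y$ are joined by three internally disjoint paths, then Menger's theorem forbids $x$ and $y$ from lying on opposite sides of the cut $\{u,v\}$, the case $\{x,y\}=\{u,v\}$ gives the edge $e$ at once, and otherwise the unique path (if any) that excursions through the far side may be rerouted across $e$ without spoiling internal disjointness, producing three internally disjoint $x$--$y$ paths inside one side and hence an edge. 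An induction on the number of blocks completes this direction.

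For the ``only if'' direction, induct on $|V(G)|$, using that $\Theta_3$-closedness is inherited by induced subgraphs (an induced subgraph keeps all its edges, so a violating triple of paths in it also violates the property in $G$). If $G$ is a cycle or a complete graph we are done. If $G$ has a cut vertex $z$, split $G$ at $z$ into two smaller, connected, $\Theta_3$-closed induced subgraphs and rebuild $G$ as their $1$-sum. So assume $G$ is $2$-connected but neither a cycle nor a complete graph. If $G$ has a $2$-vertex-cut $\{u,v\}$ with $uv\in E(G)$, partition the components of $G-\{u,v\}$ into nonempty classes $A_1,A_2$ and put $G_i=G[A_i\cup\{u,v\}]$; since $uv\in E(G)$ these are induced subgraphs, hence $\Theta_3$-closed, and are connected and smaller, so induction and a $2$-sum along $uv$ rebuild $G$. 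Everything thus reduces to the key claim: \emph{a $2$-connected $\Theta_3$-closed graph that is neither a cycle nor complete has a $2$-vertex-cut whose two vertices are adjacent.} I expect this to be the main obstacle, precisely because only an adjacent cut keeps the two sides induced (hence $\Theta_3$-closed).

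To prove the key claim, note first that a $3$-connected $\Theta_3$-closed graph is complete, since any two vertices have three internally disjoint paths by Menger; so $G$ has some $2$-vertex-cut, and if none is an edge then every $2$-vertex-cut is a pair of non-adjacent vertices. Among all $2$-vertex-cuts choose $\{u,v\}$ minimizing the number of vertices on the smaller side of $G-\{u,v\}$. Using $\Theta_3$-closedness and $uv\notin E(G)$ one shows $G-\{u,v\}$ has exactly two components $C_1,C_2$ (a third would force three internally disjoint $u$--$v$ paths, hence $uv\in E(G)$) and that, on the smaller side, $G[C_1\cup\{u,v\}]$ has a single cut vertex $w$ separating $u$ from $v$; if $|C_1|\ge 2$ then one of $\{u,w\},\{v,w\}$ is a $2$-vertex-cut of $G$ with a component strictly inside $C_1$, contradicting minimality unless that cut is an edge, which is excluded. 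Hence $C_1=\{w\}$, so $w$ has degree $2$ with non-adjacent neighbours $u,v$. Suppressing $w$ (remove $w$, add the edge $uv$) yields a smaller graph $G'$ that one checks is $2$-connected, $\Theta_3$-closed, and again has no edge $2$-vertex-cut (any edge $2$-cut of $G'$ either pulls back to one of $G$ or is $\{u,v\}$, which $G'$ no longer separates). By the inductive hypothesis the $2$-connected graph $G'$, having no edge $2$-vertex-cut, is a cycle or a complete graph; the $\Theta_3$-closedness of $G$ rules out $G'$ being complete on four or more vertices, so $G'$ is a cycle, and then $G$ --- obtained from $G'$ by subdividing the edge $uv$ --- is a cycle, contradicting our standing assumption. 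This contradiction proves the key claim and closes the induction.
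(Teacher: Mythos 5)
Your proposal is correct in its essentials, but note that the paper itself contains no proof of this statement: Theorem~\ref{JMtheorem} is quoted from Jamison and Mulder~\cite{JM}, and the paper's own work begins with the matroid analogue. The closest internal comparison is with the proof of Theorem~\ref{mainresult}, which runs through the Cunningham--Edmonds canonical tree decomposition together with the $3$-connected case (Theorem~\ref{mainres3conn}); your graph argument is the same strategy in elementary clothing --- splitting at cut vertices, splitting at adjacent $2$-cuts, and reducing the $3$-connected case to completeness via Menger --- but instead of invoking a canonical decomposition you prove the key structural fact directly: a $2$-connected $\Theta_3$-closed graph that is neither a cycle nor complete has an \emph{adjacent} $2$-cut, via a minimal non-adjacent $2$-cut, the forced degree-$2$ vertex $w$, and suppression of $w$. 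That argument checks out: the rerouting-across-the-retained-edge step in the ``if'' direction is sound (at most one path can make an excursion, since an excursion uses both $u$ and $v$, at least one internally), the inheritance of $\Theta_3$-closedness by induced subgraphs and by the suppressed graph $G'$ is genuine, and the final contradiction ($G'$ complete on at least four vertices violates closedness of $G$ at the non-adjacent pair $u,v$; $G'$ a cycle makes $G$ a cycle) is right. Two places deserve explicit tightening. First, your inductive hypothesis only yields that $G'$ is \emph{buildable}; the step ``buildable, $2$-connected, no adjacent $2$-cut, hence a cycle or complete graph'' needs the (easy, unstated) converse observation that a nontrivial $1$-sum creates a cut vertex and a nontrivial $2$-sum creates an adjacent $2$-vertex-cut --- this is exactly where retention of the identified edge matters --- together with dismissing degenerate summands such as $K_1$ and $K_2$, which give trivial operations. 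Second, ``minimizing the number of vertices on the smaller side'' should be phrased as minimizing $|C|$ over pairs consisting of a $2$-cut and a component $C$ of its removal, since $A_u\setminus\{u\}$ need not be connected; with that phrasing the minimality contradiction for $\{u,w\}$ or $\{v,w\}$ goes through verbatim. Neither point is a gap in the idea, only in the write-up.
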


This paper generalizes Theorem~\ref{JMtheorem} to binary matroids; all matroids considered here are binary unless stated otherwise. The terminology and notation follow~\cite{oxley} with the following additions. We will use $P_r$ to denote the rank-$r$ binary projective geometry, $PG(r-1,2)$. A \emph{theta-graph} is a graph that consists of two distinct vertices and three internally disjoint paths between them. A theta-graph in a matroid $M$ is a restriction of $M$ that is isomorphic to the cycle matroid of a theta-graph. Equivalently, it is a restriction of $M$ that is isomorphic to a matroid that is obtained from $U_{1,3}$ by a sequence of series extensions. The series classes of a theta-graph are its \emph{arcs}. Let $T$ be a theta-graph of $M$ with arcs $A_1$, $A_2$, and $A_3$. If $M$ has an element $e$ such that, for every $i$, either $A_i\cup e$ is a circuit of $M$, or $A_i=\{e\}$, then $e$ \emph{completes} $T$ in $M$, and $T$ is said to be \emph{complete}. A matroid $M$ is \emph{matroid $\Theta_3$-closed} if every theta-graph of $M$ is complete. The next theorem is the main result of this paper.

\begin{theorem}
\label{mainresult}
A matroid $M$ is matroid $\Theta_3$-closed if and only if $M$ can be built via direct sums and parallel connections from circuits, cycle matroids of complete graphs, and projective geometries.
\end{theorem}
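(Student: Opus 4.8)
The plan is to prove the two implications separately. The reverse implication (constructible $\Rightarrow$ matroid $\Theta_3$-closed) is routine: a circuit has no theta-graph among its restrictions; in $M(K_n)$ the edge joining the two branch vertices of a theta-graph completes it; and in a projective geometry a theta-graph with arcs $A_1,A_2,A_3$ has $\sum A_1=\sum A_2=\sum A_3$ in the $\F_2$-representation (each $A_i\cup A_j$ being a circuit), so the element represented by this common sum completes it. One then checks the two operations preserve the property: a theta-graph is connected, so in $M_1\oplus M_2$ it lies in one summand; and, from the description of the circuits of $P(M_1,M_2)$, every theta-graph of $P(M_1,M_2)$ either avoids the basepoint, and so lies on one side, or is obtained from a theta-graph of one side by substituting a path through the other side for the basepoint, with a completing element transferring in either case.

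For the forward direction I would argue by induction on $|E(M)|$, using the reformulation that a binary matroid $M$ is matroid $\Theta_3$-closed if and only if, whenever $C_1,C_2$ are circuits of $M$ with $C_1\triangle C_2$ a circuit, with $r(C_1\cup C_2)=|C_1\cup C_2|-2$, and with $|C_1\cap C_2|,|C_1\setminus C_2|,|C_2\setminus C_1|$ all at least two, the element of the ambient binary space represented by $\sum(C_1\cap C_2)$ (equivalently $\sum(C_1\setminus C_2)$ or $\sum(C_2\setminus C_1)$) is, projectively, an element of $M$; here the pair $C_1,C_2$ encodes the theta-graph with arcs $C_1\cap C_2$, $C_1\setminus C_2$, $C_2\setminus C_1$, and if instead one of these three arcs is a singleton the theta-graph is automatically complete. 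Loops, coloops, and nontrivial parallel classes peel off as direct sums with $C_1$, direct sums with $M(K_2)$, and parallel connections with $C_2$ respectively, so we may assume $M$ is simple; if $M$ is disconnected it is a direct sum of smaller matroid $\Theta_3$-closed matroids. If $M$ is connected but not $3$-connected, then either $M$ is a circuit, or some $2$-separation $(X_1,X_2)$ has its guts $\cl(X_1)\cap\cl(X_2)$ equal to a rank-$1$ flat, hence containing an element $p$ (say $p\in X_1$), which exhibits $M=P(M|X_1,\,M|(X_2\cup p))$ with both parts smaller and matroid $\Theta_3$-closed; the dichotomy holds because a $2$-separation with empty guts forces (unless $M$ is a single circuit) a theta-graph that straddles it whose only candidate completing element is the absent virtual basepoint, contradicting $\Theta_3$-closedness.

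This reduces matters to the $3$-connected case: a $3$-connected binary matroid that is matroid $\Theta_3$-closed is $M(K_n)$ or a projective geometry. If $M$ is graphic, write $M=M(G)$ with $G$ $3$-connected; if $G\neq K_n$ then $G$ has non-adjacent vertices $x,y$, and by Menger's theorem three internally disjoint $x$–$y$ paths, each of length at least two, form a theta-graph whose only candidate completing element is the absent edge $xy$, a contradiction. If $M$ is not graphic, I would show $M$ is a projective geometry. Using Tutte's excluded-minor characterization of binary graphic matroids together with the completion condition, one first produces a restriction of $M$ isomorphic to $P_3=PG(2,2)$, equivalently a rank-$3$ projective-geometry flat $H$ of $M$. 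One then grows $H$: if $M$ has a rank-$k$ projective-geometry flat $H$ with $k\geq 3$ but $M\neq P_r$, pick $z\in E(M)\setminus H$ and $f\in H$ with $z+f\notin E(M)$, then pick $g,g'\in H$ with $\{f,g,g'\}$ independent and $z+g,z+g'\in E(M)$; the sets $\{z,f\}$, $\{z+g,f+g\}$, $\{z+g',f+g'\}$ are then the arcs (all of size two) of a theta-graph of $M$ whose common arc-sum $z+f$ is not in $M$, contradicting the reformulated condition. Hence the maximal projective-geometry flat of $M$ is all of $E(M)$, and $M=P_r$.

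The main obstacle I expect lies in this non-graphic $3$-connected case. Because matroid $\Theta_3$-closedness is not closed under taking restrictions or minors, one cannot simply invoke that $M$ has no $F_7^*$-, $M^*(K_5)$-, or $M^*(K_{3,3})$-minor; instead one must use $3$-connectivity and the completion condition to force an actual $F_7$-restriction, adding the forced completing elements along the way and controlling the intermediate matroids. The growing argument has its own delicate point: one must choose $z$ so that its ``good set'' $\{h\in H:\ z+h\in E(M)\}$ has rank at least $3$ in $H$, which is what makes the required independent triple $\{f,g,g'\}$ available; this too should follow from $3$-connectivity, but needs care.
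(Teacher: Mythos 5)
Your proposal has the right overall skeleton (reverse direction via the parallel-connection lemma; forward direction reduced to the $3$-connected case, which should be $M(K_n)$ or a projective geometry), but two of its load-bearing steps are not correct as stated. First, the reduction to the $3$-connected case: you justify the dichotomy by claiming that a $2$-separation with empty guts forces an incomplete straddling theta-graph unless $M$ is a circuit. That is false. Take $M$ to be the cycle matroid of a theta graph with paths of lengths $1,3,3$, i.e.\ $M=P(C_4,C_4)$ with basepoint $p$ retained; this matroid is $\Theta_3$-closed and is not a circuit, yet the $2$-separation $(X_1,X_2)$ with $X_1$ two consecutive edges of a length-$3$ path has $\cl(X_1)\cap\cl(X_2)=\emptyset$, and no incomplete theta-graph arises. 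The problem is that when the virtual basepoint lies in a nontrivial series class on one (or both) sides, no pair of circuits through it meeting only at the basepoint exists, so the straddling theta-graph you want need not exist. What is true is that \emph{some} $2$-separation has an element in its guts, but establishing this requires choosing the separation carefully; the paper does exactly this bookkeeping with the Cunningham--Edmonds canonical tree decomposition, where the delicate case is a leaf whose neighbour is labelled by a cocircuit (a series class), handled by showing a degree-$k$ cocircuit vertex with $k\geq 3$ must carry $k+1$ elements via a theta-graph argument. Your induction would need an argument of comparable precision; the one offered does not work.

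Second, in the non-graphic $3$-connected case, the two steps you flag as ``needing care'' are in fact the core difficulty of the theorem, and neither follows the way you suggest. Producing an $F_7$-\emph{flat} from non-graphicness is nontrivial because $\Theta_3$-closedness is not closed under deletion or minors, so an $F_7$-, $F_7^\ast$-, $M^\ast(K_{3,3})$- or $M^\ast(K_5)$-minor does not directly yield a restriction; the paper instead computes the $\Theta_3$-closures of these four minors and proves a transfer result (Proposition~\ref{pginflater}) whose proof occupies most of Section~\ref{3conn} and uses the Splitter Theorem, the McNulty--Wu connected-hyperplane lemma, and a detailed rank-$4$ flat analysis. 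Your flat-growing step has the same issue: the requirement that the ``good set'' $\{h\in H: z+h\in E(M)\}$ have rank at least $3$ does \emph{not} follow from $3$-connectivity alone (one can build $3$-connected binary matroids containing a full $P_3$-flat $H$ and a point $z$ outside with that set of rank $2$), so it must be extracted from $\Theta_3$-closedness by some global argument; the paper's analogue (Lemma~\ref{pglift}) only works when the projective flat has corank $1$ and there are at least three points outside it, and reaching that situation is precisely what the heavy induction in Proposition~\ref{pginflater} accomplishes. So while your graphic case (Menger) and your reverse direction are essentially fine modulo routine case analysis (note, though, that a theta-graph of $P(M,N)$ avoiding the basepoint need not lie on one side, which is where most of the work in the paper's Proposition~\ref{parconnprop} goes), the proposal as it stands has genuine gaps at the two places where the real content of the theorem lies.
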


Suppose $M$ is isomorphic to the cycle matroid of a graph $G$. Two vertices in $G$ that are joined by three internally disjoint paths are adjacent via an edge $e$ exactly when the corresponding theta-graph of $M$ is completed by $e$. In other words, $G$ is $\Theta_3$-closed if and only if $M$ is matroid $\Theta_3$-closed. This allows us to refer to $M$ as \emph{$\Theta_3$-closed} without ambiguity. We will denote the class of $\Theta_3$-closed matroids by $\Theta_3$.

Section~\ref{prelims} introduces supporting results. The 3-connected matroids that are $\Theta_3$-closed are characterized in Section~\ref{3conn}, and the proof of Theorem~\ref{mainresult} appears in Section~\ref{main}.

\section{Preliminaries}
\label{prelims}

Our first proposition collects some essential properties of $\Theta_3$-closed matroids. These properties will be used frequently and often implicitly.

\begin{proposition}
\label{basics}
If $M\in \Theta_3$, then
\begin{itemize}
\item[(i)]{$\si(M)\in\Theta_3$;}
\item[(ii)]{$M\vert F\in\Theta_3$ for every flat $F$ of $M$; and}
\item[(iii)]{$M/e\in \Theta_3$ for every $e\in E(M)$.}
\end{itemize}
\end{proposition}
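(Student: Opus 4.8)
The plan is to prove each part by showing that any theta-graph in the smaller matroid "lifts" to a theta-graph in $M$, whose completing element then descends to a completing element in the smaller matroid. Let me think about how theta-graphs behave under each operation.

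For (ii): if $F$ is a flat and $T$ is a theta-graph of $M|F$ with arcs $A_1, A_2, A_3$, then $T$ is already a restriction of $M$, hence a theta-graph of $M$. Since $M \in \Theta_3$, there is an element $e \in E(M)$ completing $T$, so each $A_i \cup e$ is a circuit (or $A_i = \{e\}$). The key point is that $e$ must lie in $F$: since $A_1 \cup e$ is a circuit and $A_1 \subseteq F$, we have $e \in \cl_M(A_1) \subseteq F$ because $F$ is a flat. Hence $e \in E(M|F)$ and $e$ completes $T$ in $M|F$.

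For (i): simplification is, up to isomorphism, a restriction of $M$ to a flat (choose one element from each parallel class; the resulting set need not be a flat, but $\si(M)$ is isomorphic to $M|F$ where $F$ is the flat spanned by that set — actually, more carefully, I should argue directly). A cleaner route: a theta-graph has no parallel elements and no loops, so a theta-graph of $\si(M)$ corresponds exactly to a theta-graph of $M$ (each element of $\si(M)$ represents a parallel class of $M$, and we may pick representatives), and a completing element for it in $M$ projects to a completing element in $\si(M)$, since circuits of $M$ map to circuits or to parallel pairs, the latter being impossible within a theta-graph structure. I would need to handle the edge case where the completing element $e$ is parallel to some element already used in an arc $A_i$ (so $A_i \cup e$ would be a circuit with $|A_i| = 1$); in that case $A_i = \{e'\}$ with $e'$ parallel to $e$, and in $\si(M)$ that arc becomes a single element, consistent with the definition of "completes." This case analysis is the fiddly part.

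For (iii): this is the main obstacle. Given a theta-graph $T$ of $M/e$ with arcs $A_1, A_2, A_3$, I need to produce a theta-graph of $M$ to which I can apply the hypothesis. If $e$ is a loop or coloop of $M$ this is immediate since $M/e$ is then essentially a restriction. Otherwise, $T \cup e$ is a restriction of $M$, and I would analyze what circuits $M|(T \cup e)$ has: in $M/e$, each $A_i$ is independent (since arcs of a theta-graph are independent) while $A_1 \cup A_2$, $A_2 \cup A_3$, $A_1 \cup A_3$ are the circuits of $T$. Lifting to $M$, for each such pair either it remains a circuit of $M$ or its union with $e$ is a circuit of $M$. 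The goal is to locate a theta-graph inside $M|(T\cup e)$ — quite possibly $T$ itself is still a theta-graph of $M$ (if none of the three circuits needs $e$), in which case the completing element $f$ satisfies $A_i \cup f$ circuit in $M$, hence $A_i \cup f$ is dependent in $M/e$; I then need $A_i \cup f$ to actually be a circuit of $M/e$, i.e. $f \notin \cl_M(A_i \cup e) \setminus \cl_M(A_i)$ — equivalently $f$ is not parallel to $e$-related junk. The remaining case, where some circuit of $T$ genuinely requires $e$, forces $e \in \cl_M(A_i \cup A_j)$, and a short rank count should show $T \cup e$ contains a theta-graph of $M$ using $e$ on one arc; its completing element again descends. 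I expect to invoke that in a binary matroid the relevant closures are well-controlled, and the whole argument to come down to a careful bookkeeping of which of the finitely many subsets of $T \cup e$ are circuits. The contraction case is where I'd budget most of the writing.
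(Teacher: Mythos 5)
Parts (i) and (ii) of your proposal are sound: for (ii), the observation that a completing element $e$ satisfies $e\in\cl_M(A_i)\subseteq F$ (or already lies on an arc) is exactly the point, and your handling of parallel representatives in (i) addresses the only delicate case. The gap is in (iii), the only part the paper proves in detail. First, the one precise criterion you state there is not the right one: in the case where $T$ survives as a theta-graph of $M$ with completing element $f$, you require ``$f\notin\cl_M(A_i\cup e)\setminus\cl_M(A_i)$''; but $f\in\cl_M(A_i)$ holds automatically whenever $A_i\cup f$ is a circuit of $M$, so this condition never fails and cannot be the obstruction. The correct requirement for $A_i\cup f$ to remain a circuit of $M/e$ is $e\notin\cl_M(A_i\cup f)$ (in general a circuit of $M$ avoiding $e$ need not stay a circuit of $M/e$), and ruling this out takes an argument: in this case all three sets $A_i\cup A_j$ are circuits of $M$, and comparing them with the circuits of $(M/e)\vert T$ forces $e$ to be a loop of $M$ or $e\notin\cl_M(T)=\cl_M(T\cup f)$, after which the descent goes through. (A smaller instance of the same issue: your opening reduction ``$e$ a loop or coloop of $M$ is immediate since $M/e$ is essentially a restriction'' needs the remark that a completing element is never a loop and never a coloop, since $\Theta_3$ is not closed under arbitrary deletion---compare the paper's use of $P_r\del q\notin\Theta_3$.)

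Second, what you call ``the remaining case'' is precisely where binaryness must do work, and ``a short rank count'' plus ``careful bookkeeping'' is asserted rather than carried out. What must be shown is the dichotomy that either none of the three lifted circuits uses $e$, or $M\vert(T\cup e)$ is itself a theta-graph with $e$ in series on one arc: for instance, if $A_1\cup A_2\cup e$ and $A_1\cup A_3\cup e$ are circuits of $M$, their symmetric difference $A_2\cup A_3$ is a disjoint union of circuits of $M$ and is forced to be a single circuit, while ``exactly one union needs $e$'' and ``all three need $e$'' are impossible unless $e$ is a loop. This dichotomy is the entire content of the paper's compact dual argument: $[M\vert(T\cup e)]^\ast$ is a binary single-element extension of $[(M/e)\vert T]^\ast$, whose simplification is all of $U_{2,3}$, so $e$ can only sit in $M\vert(T\cup e)$ as a loop, a coloop, or in series with an element of $T$, and completeness then transfers in each case. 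Your plan is the primal counterpart of that argument and is salvageable along exactly these lines, but as written the central case analysis of (iii) is missing and the descent criterion you do state is incorrect.
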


\begin{proof}
Parts (i) and (ii) are straightforward. For part (iii), let $T$ be a theta-graph of $M/e$. Then $[(M/e)\vert T]^\ast$ is obtained from $U_{2,3}$ by adding elements in parallel to the existing elements. Since $M$ is binary, it follows that $M^\ast/(E(M)-(T\cup e))$ is obtained from $[(M/e)\vert T]^\ast$, that is, from $M^\ast/(E(M)-(T\cup e))\del e$, by adding $e$ as a coloop or by adding $e$ in parallel to one of the existing elements. Thus, $e$ is either a loop in $M\vert (T\cup e)$, or is in series with another element. Hence, since $T$ is complete in $M$, it is complete in $M/e$.
\end{proof}

Evidently, a matroid is in $\Theta_3$ if and only if its connected components are in $\Theta_3$. This will also be used implicitly throughout the paper. The following is an immediate consequence of Proposition~\ref{basics}.

\begin{corollary}
\label{parallelminorclosed}
If $M\in\Theta_3$ and $N$ is a parallel minor of $M$, then $N\in\Theta_3$.
\end{corollary}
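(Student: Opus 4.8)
The plan is to derive Corollary~\ref{parallelminorclosed} directly from Proposition~\ref{basics} by showing that any parallel minor can be obtained by iterating the three operations listed there. Recall that a \emph{parallel minor} of $M$ is any matroid obtainable from $M$ by a sequence of the following moves: deleting elements, contracting elements, and adding an element in parallel to an existing one. Since the class $\Theta_3$ is closed under each of these moves, $N\in\Theta_3$ will follow by induction on the length of such a sequence; the base case $N=M$ is immediate.

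First I would handle contraction: this is exactly Proposition~\ref{basics}(iii), so contracting a single element preserves membership in $\Theta_3$. Next I would handle deletion. Deleting an element $e$ that is a loop or that lies in a parallel pair changes nothing essential, so the interesting case is deleting an element that is not in the closure of the complementary set; but in general $M\del e = (M\vert F)$ where $F = E(M)-e$ need not be a flat. Instead I would argue that $M \del e$ is a restriction of the simplification-type reduction: more carefully, note that $E(M)-e$ is contained in some flat only when $e$ is not a coloop. The cleanest route is: if $e$ is a coloop of $M$, then $M\del e = M/e$, which is in $\Theta_3$ by part (iii); otherwise $\cl_M(E(M)-e) = E(M)$ is impossible to use directly, so instead observe that any theta-graph $T$ of $M\del e$ is also a theta-graph of $M$, and if $f$ completes $T$ in $M$ then $f\neq e$ (since $e\notin T$ and no arc of $T$ equals $\{e\}$), so $f$ completes $T$ in $M\del e$ as well. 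Thus deletion trivially preserves $\Theta_3$, and in fact this same one-line observation shows every restriction of an element of $\Theta_3$ is in $\Theta_3$, subsuming part (ii).

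Finally, adding an element $e'$ in parallel to an existing element $e$: a theta-graph $T$ of the new matroid $M'$ either avoids $e'$, in which case it is a theta-graph of $M$ and its completing element (which is not $e'$, or can be taken to be $e$ in place of $e'$) still works in $M'$; or $T$ contains $e'$, in which case replacing $e'$ by $e$ inside the relevant arc yields a theta-graph $T_0$ of $M$ with the same arcs up to this substitution, and any element completing $T_0$ in $M$ completes $T$ in $M'$ after the swap. So each of the three generating moves preserves $\Theta_3$, completing the induction.

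I do not expect any serious obstacle here; the only point requiring mild care is the bookkeeping in the parallel-addition step, ensuring that when the new parallel element $e'$ appears inside a theta-graph one correctly transfers back to a genuine theta-graph of $M$ and transfers the completing element forward. Everything else is an immediate consequence of Proposition~\ref{basics} together with the observation that restriction to an arbitrary subset (not just a flat) preserves $\Theta_3$ by the trivial theta-graph argument above.
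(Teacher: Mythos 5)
Your argument contains a genuine error, and it stems from two related misreadings. First, the definition of a parallel minor: it is a matroid obtained from $M$ by a sequence of contractions and deletions of elements that are loops or are parallel to elements that are kept; it does not permit arbitrary deletions, and it never involves adding new parallel elements. Second, and fatally, your claim that ``deletion trivially preserves $\Theta_3$'' is false, and your justification misreads the definition of completion. An element $f$ that completes a theta-graph $T$ need not lie in $T$: when every arc $A_i$ has at least two elements, the condition is that $A_i\cup f$ is a circuit for each $i$, and such an $f$ lies outside $T$. So nothing prevents $f$ from being the deleted element $e$. Concretely, $M(K_5)$ is $\Theta_3$-closed (it is the cycle matroid of a complete graph), but if $e=xy$ is an edge, then $M(K_5)\del e$ contains the theta-graph whose arcs are the three two-edge paths from $x$ to $y$, and the only element that could complete it is the deleted edge $e$; hence $M(K_5)\del e\notin\Theta_3$. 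This is precisely why Proposition~\ref{basics}(ii) is stated for flats only: if $A_i\cup f$ is a circuit then $f\in\cl(A_i)\subseteq F$, so a completing element cannot escape a flat, whereas it can escape an arbitrary restriction. Your assertion that restriction to arbitrary subsets preserves $\Theta_3$ and ``subsumes part (ii)'' is therefore wrong, and it is exactly the reason the paper works with parallel minors rather than minors (the class $\Theta_3$ is not minor-closed).

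The repair is short once the correct definition is used. Contractions are handled by Proposition~\ref{basics}(iii). For the deletion step one only ever deletes a loop or an element $e$ parallel to a retained element $f$; if a theta-graph $T$ of $M\del e$ is completed in $M$ by $e$, then, since $e\notin T$, each $A_i\cup e$ is a circuit, and replacing $e$ by its parallel mate $f$ (circuit exchange with the parallel pair $\{e,f\}$ in a binary matroid, noting that if $f\in A_i$ then $A_i=\{f\}$) shows that $f$ completes $T$ in $M\del e$. Alternatively one can reach the same conclusion from parts (i) and (ii) of Proposition~\ref{basics}. This is the content of the paper's one-line derivation of Corollary~\ref{parallelminorclosed}; your extra step about adding parallel elements is not needed for this statement.
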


From, for example,~\cite[Exercise 8.3.3]{oxley}, if $M=M_1\oplus_2 M_2$, then $M_1$ and $M_2$ are parallel minors of $M$. The next result now follows from Corollary~\ref{parallelminorclosed}.

\begin{corollary}
\label{2summands}
If $M\oplus_2 N$ is in $\Theta_3$, then $M$ and $N$ are in $\Theta_3$.
\end{corollary}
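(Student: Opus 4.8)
The plan is to derive the corollary immediately from the parallel-minor machinery already in place. Recall that the $2$-sum $M\oplus_2 N$ is defined precisely when $M$ and $N$ are matroids whose ground sets meet in a single element $p$ (the \emph{basepoint}), with $|E(M)|,|E(N)|\ge 3$ and $p$ neither a loop nor a coloop of $M$ or of $N$; it is the restriction of the parallel connection $P(M,N)$ with basepoint $p$ to $E(M)\bigtriangleup E(N)$. Under these hypotheses $M$ and $N$ are genuine matroids in their own right, so the statement ``$M,N\in\Theta_3$'' is meaningful.

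The first step is to invoke the structural fact, cited above from \cite[Exercise~8.3.3]{oxley}, that whenever $M=M_1\oplus_2 M_2$, each of $M_1$ and $M_2$ is a parallel minor of $M$. Concretely, to recover $M_1$ one chooses an element $q$ of $E(M_2)-p$ lying on a circuit of $M_2$ through $p$, deletes from $M_1\oplus_2 M_2$ the remaining elements of $E(M_2)-p$ together with any elements needed to make $\{p,q\}$ dependent, and then contracts $q$, which at that stage is parallel to the basepoint; a symmetric argument recovers $M_2$. The second step is then purely formal: since $M\oplus_2 N\in\Theta_3$ by hypothesis, and $M$ and $N$ are parallel minors of $M\oplus_2 N$, Corollary~\ref{parallelminorclosed} yields $M\in\Theta_3$ and $N\in\Theta_3$.

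There is no real obstacle here; all of the substance has been carried by Proposition~\ref{basics} and propagated through Corollary~\ref{parallelminorclosed}. The only point meriting a moment's care is that the convention for $\oplus_2$ adopted in this paper (in particular, the basepoint conditions guaranteeing that the operation is well defined, and the treatment of the basepoint in $P(M,N)$) coincides with that of \cite{oxley}, so that the cited exercise applies verbatim.
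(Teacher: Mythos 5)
Your proposal is correct and takes essentially the same route as the paper: it cites the fact (\cite[Exercise 8.3.3]{oxley}) that both parts of a 2-sum are parallel minors of the 2-sum and then applies Corollary~\ref{parallelminorclosed}. (Your parenthetical sketch of how to recover $M$ as a parallel minor is slightly garbled---the basepoint $p$ is not in the ground set of $M\oplus_2 N$, and the chosen element $q$ should be retained to play the role of $p$ rather than contracted, with only parallel elements deleted---but this is inessential, since the cited exercise carries that step exactly as it does in the paper.)
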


To see that the converse of the last corollary fails, observe that $M(K_{2,4})$ is not in $\Theta_3$ although it is the 2-sum of two copies of a matroid in $\Theta_3$.

We conclude this section with a result about constructing larger matroids in $\Theta_3$ from smaller ones. Recall that, for sets $X$ and $Y$ in a matroid $M$, the \emph{local connectivity} between $X$ and $Y$, denoted $\sqcap(X,Y)$, is defined by $\sqcap(X,Y)=r(X)+r(Y)-r(X\cup Y)$. We will use the following result about local connectivity from, for example,~\cite[Lemma 8.2.3]{oxley}.

\begin{lemma}
\label{staplelemma}
Let $X_1$, $X_2$, $Y_1$, and $Y_2$ be subsets of the ground set of a matroid $M$. If $X_1\supseteq Y_1$ and $X_2\supseteq Y_2$, then $\sqcap(X_1,X_2)\geq\sqcap(Y_1,Y_2)$.
\end{lemma}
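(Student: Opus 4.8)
The plan is to deduce the lemma from the two basic properties of the rank function of a matroid: submodularity, $r(A)+r(B)\geq r(A\cup B)+r(A\cap B)$, and monotonicity, $r(A)\leq r(B)$ whenever $A\subseteq B$. Since $\sqcap(X,Y)=\sqcap(Y,X)$, it suffices to establish the one-coordinate version: if $X_1\supseteq Y_1$, then $\sqcap(X_1,Z)\geq\sqcap(Y_1,Z)$ for every subset $Z$. Granting this, the lemma follows from the chain
\[
\sqcap(X_1,X_2)\ \geq\ \sqcap(Y_1,X_2)\ =\ \sqcap(X_2,Y_1)\ \geq\ \sqcap(Y_2,Y_1)\ =\ \sqcap(Y_1,Y_2),
\]
where the first inequality uses $X_1\supseteq Y_1$ with $Z=X_2$, and the second uses $X_2\supseteq Y_2$ with $Z=Y_1$.

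To prove the one-coordinate version, I would substitute the definition $\sqcap(X,Y)=r(X)+r(Y)-r(X\cup Y)$ into both sides and cancel the common summand $r(Z)$, so that $\sqcap(X_1,Z)\geq\sqcap(Y_1,Z)$ becomes the inequality
\[
r(X_1)+r(Y_1\cup Z)\ \geq\ r(Y_1)+r(X_1\cup Z).
\]
Apply submodularity to $A=X_1$ and $B=Y_1\cup Z$. Because $Y_1\subseteq X_1$, we have $A\cup B=X_1\cup Z$ and $Y_1\subseteq A\cap B=X_1\cap(Y_1\cup Z)$, so submodularity followed by monotonicity gives
\[
r(X_1)+r(Y_1\cup Z)\ \geq\ r(X_1\cup Z)+r\bigl(X_1\cap(Y_1\cup Z)\bigr)\ \geq\ r(X_1\cup Z)+r(Y_1),
\]
which is exactly what is needed.

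There is no real difficulty in this argument; the only things requiring care are the set identities ($X_1\cup(Y_1\cup Z)=X_1\cup Z$ and $Y_1\subseteq X_1\cap(Y_1\cup Z)$) and the decision to prove monotonicity in the two coordinates separately rather than attempting to pass from $(Y_1,Y_2)$ to $(X_1,X_2)$ in one step. One could instead argue via the identity $\sqcap(X,Y)=r(X)-r_{M/Y}(X)$ together with the fact that contraction does not increase the increments of the rank function, but the direct submodularity computation is shorter and entirely self-contained.
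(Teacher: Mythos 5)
Your proof is correct: the reduction to the one-coordinate statement via symmetry of $\sqcap$ is valid, and the submodularity step with $A=X_1$, $B=Y_1\cup Z$ together with monotonicity (using $Y_1\subseteq X_1\cap(Y_1\cup Z)$ and $X_1\cup(Y_1\cup Z)=X_1\cup Z$) is exactly what is needed. Note that the paper itself gives no proof of this lemma, citing it as~\cite[Lemma 8.2.3]{oxley}; your argument is the standard submodularity-plus-monotonicity proof found there, so there is nothing to reconcile.
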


\begin{proposition}
\label{parconnprop}
For matroids $M$ and $N$, the parallel connection $P(M,N)$ is in $\Theta_3$ if and only if $M\in \Theta_3$ and $N\in \Theta_3$.
\end{proposition}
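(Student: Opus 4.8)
The plan is to argue from the standard description of the circuits of a parallel connection (see \cite{oxley}): writing $p$ for the basepoint of $Q:=P(M,N)$ and $E_M=E(M)-p$, $E_N=E(N)-p$, every circuit of $Q$ is a circuit of $M$, a circuit of $N$, or a set $(C_M\cup C_N)-p$ with $C_M$ a circuit of $M$ and $C_N$ a circuit of $N$, both containing $p$; I call the last type \emph{mixed}, and I use repeatedly that a circuit of $Q$ contained in $E(M)$ is a circuit of $M$.

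For the forward implication, suppose $Q\in\Theta_3$ and let $T$ be a theta-graph of $M$ with arcs $A_1,A_2,A_3$. Since $M=Q\vert E(M)$, $T$ is a theta-graph of $Q$, so some element $e$ completes $T$ in $Q$. As a completing element lies in $\cl_Q(T)\subseteq\cl_Q(E(M))$, either $e\in E(M)$, whence each $A_i\cup e$ is a circuit of $Q$ inside $E(M)$, hence of $M$, so $e$ completes $T$ in $M$; or $e\in E_N$ with $e$ parallel to $p$, in which case eliminating $e$ between $\{e,p\}$ and each circuit $A_i\cup e$ shows that $A_i\cup p$ is a circuit of $M$ when $p\notin A_i$ and that $p\in A_i$ forces $A_i=\{p\}$, so $p$ completes $T$ in $M$. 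Hence $M\in\Theta_3$, and $N\in\Theta_3$ by symmetry.

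For the converse, assume $M,N\in\Theta_3$. If $p$ is a loop or a coloop of $M$ or of $N$, then $Q$ is a direct sum of a minor of $M$ and a minor of $N$ that lie in $\Theta_3$ (removing a loop or coloop keeps a matroid in $\Theta_3$), so $Q\in\Theta_3$; thus assume $p$ is neither. Let $T$ be a theta-graph of $Q$ with arcs $A_1,A_2,A_3$. If $T\subseteq E(M)$ then $Q\vert T=M\vert T$, so $T$ is a theta-graph of $M\in\Theta_3$ and a completing element for it in $M$ also completes $T$ in $Q$; similarly if $T\subseteq E(N)$. So assume $T$ meets both $E_M$ and $E_N$. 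Each of the three cycles $A_i\cup A_j$ of $T$ is then a circuit of $M$, a circuit of $N$, or mixed; since any two cycles cover $T$, at most one is a circuit of $M$ and at most one is a circuit of $N$. Up to the symmetry between $M$ and $N$ this gives three cases. \emph{(i)} Some cycle is a circuit of $M$ and another a circuit of $N$: their common arc lies in $E(M)\cap E(N)=\{p\}$, so it equals $\{p\}$, and then $p$ completes $T$. \emph{(ii)} One cycle, say $A_1\cup A_2$, is a circuit of $M$ and the other two are mixed: the two mixed cycles avoid $p$ and cover $T$, so $p\notin T$, and I shall show $(T\cap E_M)\cup\{p\}$ is the ground set of a theta-graph $T_M$ of $M$ with arcs $A_1$, $A_2$, $(A_3\cap E_M)\cup\{p\}$; a completing element $e$ of $T_M$ in $M$ then completes $T$ in $Q$, as one checks by combining the resulting $M$-circuits with the circuit $(A_3\cap E_N)\cup\{p\}$ of $N$ via the circuit rule for $Q$. \emph{(iii)} All three cycles are mixed: this is impossible, for if $C_i$ is the circuit of $N$ with $C_i-p=(A_j\cup A_k)\cap E_N$, where $\{i,j,k\}=\{1,2,3\}$, then $C_1$ and $C_2$ are distinct (their symmetric difference is $(A_1\cup A_2)\cap E_N$, non-empty because $A_1\cup A_2$ is mixed), so $C_1\,\triangle\,C_2$ is a non-empty dependent set of $N$, yet $C_1\,\triangle\,C_2=C_3-p$ is a proper subset of the circuit $C_3$, hence independent -- a contradiction.

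The main obstacle is the claim in case (ii) that $T_M$ is a genuine theta-graph of $M$, rather than a more degenerate restriction, and I expect to establish it by a cycle-space computation over $GF(2)$. Because $p\in\cl_Q(T)$, the restriction $Q\vert(T\cup p)$ has nullity $3$, with cycle space spanned by $A_2\cup A_3$, $A_1\cup A_3$, and $(A_3\cap E_N)\cup\{p\}$; deleting $A_3\cap E_N$ kills the latter two generators and leaves the $2$-dimensional cycle space spanned by $A_1\cup(A_3\cap E_M)\cup\{p\}$ and $A_2\cup(A_3\cap E_M)\cup\{p\}$, whose three non-zero members are pairwise incomparable and hence are exactly the circuits of $M\vert T_M$; a binary matroid with precisely these three circuits is the cycle matroid of a theta-graph. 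The remaining steps -- that completing elements transfer between $M$ or $N$ and $Q$, and the elimination computation in the forward direction -- are routine consequences of the circuit description of $P(M,N)$.
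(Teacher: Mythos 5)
Your proposal is correct, and although your converse follows the same basic strategy as the paper's --- a case analysis of how a theta-graph of $P(M,N)$ straddles the basepoint, driven by the circuit description of the parallel connection, binary symmetric-difference arguments, and the substitution of $p$ for the part of the theta-graph lying in the other matroid followed by transferring the completing element --- the organization differs and the forward direction is genuinely different. The paper proves the forward implication by showing that $M$ and $N$ are parallel minors of $P(M,N)$ and invoking Corollary~\ref{parallelminorclosed}; you argue directly that a completing element of a theta-graph of $M$ lies in $\cl_Q(E(M))$, hence is in $E(M)$ or parallel to $p$, in which case $p$ itself completes --- more elementary and self-contained (the only omission is that a loop of $N$ also lies in $\cl_Q(E(M))$, but a loop can never complete a theta-graph). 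In the converse, the paper classifies by how many arcs meet both $E(M\del p)$ and $E(N\del p)$, killing the ``all three arcs cross'' case with local connectivity (Lemma~\ref{staplelemma}) and the ``two arcs cross'' case with circuit elimination; your classification by the types (circuit of $M$, circuit of $N$, mixed) of the three cycles $A_i\cup A_j$ folds both of those into the single ``three mixed cycles are impossible'' argument, which is arguably cleaner and avoids Lemma~\ref{staplelemma} entirely. Your GF$(2)$ cycle-space computation verifying that $T_M$ is genuinely a theta-graph of $M$ makes explicit the step the paper dispatches with ``$T'$ is isomorphic to a series minor of $T$,'' and the deferred transfer of the completing element does work exactly as you indicate, including the boundary case $A_3\cap E_M=\emptyset$, where the third arc of $T_M$ is $\{p\}$ and $p$ completes.

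Two small patches are needed. First, you treat the degenerate basepoint (a loop or coloop) only in the converse; your forward argument tacitly assumes nondegeneracy, both in using the circuit description and in identifying $\cl_Q(E(M))\cap(E(N)-p)$ with the elements parallel to $p$, so the same convention-based remark should be recorded there too. Second, in your degenerate-case sentence the parenthetical justification should be that $\Theta_3$ is closed under contraction (Proposition~\ref{basics}(iii)) rather than under removal of loops and coloops: under the standard convention, when $p$ is a loop or coloop of one matroid, it is $p$ in the \emph{other} matroid that gets contracted, and there $p$ need not be a loop or a coloop. Both are routine fixes and do not affect the substance of the argument.
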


\begin{proof}
Let $p$ be the basepoint of the parallel connection. When $p$ is a loop or a coloop of $M$, the matroid $P(M,N)$ is $M\oplus (N/p)$ or $(M/ p)\oplus N$, respectively. In these cases, it follows using Proposition~\ref{basics} that the result holds. Thus we may assume that $p$ is neither a loop nor a coloop of $M$ or $N$. Suppose $P(M,N)\in\Theta_3$. Let $B_M$ be a basis for $M$ containing $p$. Extend $B_M$ to a basis $B$ for $P(M,N)$. After contracting both the elements of $B-B_M$ in $P(M,N)$ as well as all of the resulting loops, the remaining elements of $E(N)-p$ are parallel to $p$. We deduce that $M$, and similarly $N$, is a parallel minor of $P(M,N)$. Hence, by Corollary~\ref{parallelminorclosed}, $M$ and $N$ are in $\Theta_3$.

Conversely, suppose that $M, N\in \Theta_3$ and let $T$ be a theta-graph of $P(M,N)$ with arcs $A_1$, $A_2$, and $A_3$. Then we may assume that $\vert A_i\vert\geq 2$ for each $i$, otherwise $T$ is complete. Suppose $p\in A_1$. Then $A_1\cup A_2$ is a circuit containing $p$, so it is contained in $E(M)$ or $E(N)$ depending on which of these sets contains $A_1-p$. It follows that the same set contains $A_2$ and, likewise $A_3$, so $T$ is complete. Hence we may assume that $p\notin T$.

Suppose that each of the arcs of $T$ meets both $E(M\del p)$ and $E(N\del p)$. Let $T_M=E(T)\cap E(M)$, and similarly for $T_N$. Note that $T_M$ and $T_N$ are independent, and $T_M\cup T_N=E(T)$, so
\begin{align*}
\sqcap(T_M,T_N)&=r(T_M)+r(T_N)-r(T_M\cup T_N)\\
&=\vert T_M\vert + \vert T_N\vert - (\vert T_M\vert + \vert T_N\vert - 2)\\
&=2.
\end{align*}
However, $\sqcap(E(M),E(N))=1$, contradicting Lemma~\ref{staplelemma}.

Next, suppose that each of $A_1$ and $A_2$ meets both $E(M\del p)$ and $E(N\del p)$. Then, from above, we may assume that $A_3\subseteq E(M\del p)$. The circuits $A_1\cup A_3$ and $A_2\cup A_3$ have the form $(C_1-p)\cup(D_1-p)$ and $(C_2-p)\cup(D_2-p)$, respectively, for circuits $C_1$ and $C_2$ of $M$ containing $p$, and circuits $D_1$ and $D_2$ of $N$ containing $p$. Because $A_3\subseteq E(M)$ and $A_1\cap A_2=\emptyset$, it follows that $D_1-p$ and $D_2-p$ are disjoint. However, since $M$ is binary, $D_1\triangle D_2$ contains a circuit of $P(M,N)$ that is properly contained in the circuit $A_1\cup A_2$, a contradiction.

Now, suppose that $A_1$ meets both $E(M\del p)$ and $E(N\del p)$. Then, from above, each of the remaining arcs of $T$ lies in $E(M\del p)$ or $E(N\del p)$. We may assume that $A_2\subseteq E(M\del p)$. Suppose $A_3\subseteq E(N\del p)$. Then the circuits $A_1\cup A_2$ and $A_3\cup A_2$ have the form $(C_1-p)\cup (D_1-p)$ and $(C_3-p)\cup(D_3-p)$, respectively, for circuits $C_1$ and $C_3$ of $M$ containing $p$, and circuits $D_1$ and $D_3$ of $N$ containing $p$. Now, since $A_2\subseteq E(M\del p)$ and $A_1$ meets $E(M\del p)$, the set $C_1-p$ properly contains $A_2$. Further, as $A_3$ does not meet $E(M\del p)$, we have that $A_2=C_3-p$. This means $A_2\cup p$ is the circuit $C_3$, but $A_2\cup p$ is properly contained in $C_1$, a contradiction.

We conclude that $A_3\subseteq E(M\del p)$. Form $T'$ from $T$ by replacing the portion of $A_1$ in $E(N\del p)$ by $p$. Observe that $T'$ is isomorphic to a series minor of $T$, so $T'$ is a theta-graph. Moreover, $T'$ is a theta-graph of $M$, so it is completed in $M$ by an element $f$. Now, since $T$ and $T'$ share an arc, $f$ also completes $T$ in $P(M,N)$.

We are left to consider the case when each arc of $T$ is contained in either $E(M)$ or $E(N)$. If all three arcs belong to $E(M)$, say, then $T$ is complete in $M$, and so is complete in $P(M,N)$. Otherwise, $p$ completes $T$.
\end{proof}

\section{The $3$-Connected $\Theta_3$-closed Matroids}
\label{3conn}

The proof of Theorem~\ref{mainresult} will use the canonical tree decomposition of Cunningham and Edmonds~\cite{cunned} and, in support of that approach, this section proves the following 3-connected form of Theorem~\ref{mainresult}.

\begin{theorem}
\label{mainres3conn}
Let $M$ be a simple $3$-connected $\Theta_3$-closed matroid. Then $M$ is a projective geometry or the cycle matroid of a complete graph.
\end{theorem}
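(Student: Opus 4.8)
The plan is to characterize simple $3$-connected matroids $M$ in $\Theta_3$ by an induction on $r(M)$, showing that at every rank the only candidates are $PG(r-1,2)$ and $M(K_n)$. The base cases are small: in rank at most $3$ the simple $3$-connected binary matroids are $U_{1,1}$ (rank $1$, trivial), $M(K_4)\cong PG(1,2)$-type checks, and $PG(2,2)=F_7$, all of which are visibly $\Theta_3$-closed or visibly fail to be, so the statement can be verified directly. For the inductive step, fix $e\in E(M)$ and pass to $\si(M/e)$, which lies in $\Theta_3$ by Proposition~\ref{basics}(i) and (iii). The key structural fact I would exploit is that $\si(M/e)$ is again $3$-connected (this is standard for $3$-connected matroids of rank at least $4$, via Bixby's Lemma or Tutte's wheels-and-whirls-type arguments on the vertical connectivity of $M/e$), so by induction $\si(M/e)$ is either a projective geometry or a graphic matroid $M(K_n)$. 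The heart of the proof is then to propagate this back up to $M$.

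The two cases are handled separately. First suppose $\si(M/e)\cong M(K_n)$ for every choice of $e$; then I would argue that $M$ itself is graphic. The cleanest route is to use the excluded-minor characterization of graphic matroids: if $M$ is $3$-connected and non-graphic it contains one of $F_7$, $F_7^\ast$, $M^\ast(K_5)$, $M^\ast(K_{3,3})$ as a minor, and I would show each of these (or a suitable parallel minor extracted from them) produces an incomplete theta-graph, contradicting Corollary~\ref{parallelminorclosed}. For instance $M^\ast(K_{3,3})$ and $M^\ast(K_5)$ contain $M(K_{2,3})$-like or $M(K_{2,4})$-like parallel minors, which were already observed not to be $\Theta_3$-closed; $F_7^\ast$ must be dispatched by hand. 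Once $M$ is graphic and $3$-connected, a short graph-theoretic argument — essentially the $3$-connected case of Theorem~\ref{JMtheorem}, which we may invoke — forces $M\cong M(K_n)$. Second, suppose $\si(M/e)\cong PG(r-2,2)$ for some $e$. Here I want to show $M\cong PG(r-1,2)$. The idea is that contracting $e$ and simplifying collapses parallel classes; knowing the result is all of $PG(r-2,2)$ means that $M\backslash e$ restricted to each hyperplane-through-$e$ structure is forced to be large, and a counting/closure argument (every line of $M$ must have three points, which for a simple binary matroid of rank $r$ with this many points forces $M=PG(r-1,2)$) finishes it. The constraint that makes the counting work is precisely $\Theta_3$-closure: any two points $x,y$ lying on a common circuit with a third point must be joined by a ``completing'' element, and iterating this along the structure inherited from $PG(r-2,2)$ saturates $M$.

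The main obstacle I anticipate is the second case — promoting $\si(M/e)\cong PG(r-2,2)$ to $M\cong PG(r-1,2)$ — because a priori $M\backslash e$ could be a proper restriction of a projective geometry that happens to simplify-contract onto $PG(r-2,2)$, and I must rule out every such ``deficient'' configuration. I expect to need the full force of $3$-connectivity together with repeated applications of the theta-completion axiom to show there are no missing points: concretely, I would take an element $f\in PG(r-2,2)-\si(M/e)$'s complement perspective, lift a short circuit through $f$, and build a theta-graph whose completing element must be a new point of $M$, contradicting maximality unless $M$ was already $PG(r-1,2)$. Managing the interaction between the contraction, the simplification, and the binary structure — making sure the theta-graphs I construct in $M$ genuinely survive and are genuinely incomplete — is where the technical care will concentrate. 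A secondary subtlety is the case analysis when different elements $e$ give different answers (some $\si(M/e)$ graphic, others projective); I would argue this cannot happen for a connected $M$ of rank at least $4$, or else handle the mixed situation by showing one of the two conclusions still dominates.
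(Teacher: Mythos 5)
Your proposal has a genuine gap, and it is fatal to the first half of your plan: you treat $\Theta_3$ as if it were minor-closed. In your Case 1 you argue that a non-graphic $M$ contains one of $F_7$, $F_7^\ast$, $M^\ast(K_{3,3})$, $M^\ast(K_5)$ as a minor and that this minor ``produces an incomplete theta-graph, contradicting Corollary~\ref{parallelminorclosed}.'' But that corollary only covers \emph{parallel} minors, and the excluded minors for graphicness need not arise as parallel minors (nor as restrictions to flats, nor by contractions alone); deletions can destroy precisely the completing elements, so an incomplete theta-graph in a general minor is no contradiction at all. Indeed your argument would ``prove'' that every $3$-connected member of $\Theta_3$ is graphic, which is false: $P_4$ is in $\Theta_3$ and contains $F_7^\ast$ (which does have incomplete theta-graphs) as a minor, and $F_7$ itself is in $\Theta_3$ so it cannot yield any incomplete theta-graph in the first place. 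The correct role of these four minors is the opposite of forbidden configurations: one must show that the presence of any of them \emph{forces} $M$ to be a projective geometry. That is what the paper does, by computing that the $\Theta_3$-closure of each excluded minor is a projective geometry and then invoking Proposition~\ref{pginflater}, whose proof (via Lemma~\ref{pglift}, Corollary~\ref{prfromf7s}, the McNulty--Wu connected-hyperplane lemma, the Splitter Theorem, and a delicate analysis of rank-$4$ flats through a missing line) is the technical core of Section~\ref{3conn}. Also note a smaller error: $\si(M/e)$ need not be $3$-connected for an \emph{arbitrary} $e$ in a $3$-connected matroid of rank at least $4$; one only gets the existence of a suitable $e$, so your hypothesis ``$\si(M/e)\cong M(K_n)$ for every choice of $e$'' is not available, and even granting it the implication you want fails at rank $3$ ($F_7$).

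Your Case 2 --- promoting $\si(M/e)\cong P_{r-2}$ to $M\cong P_{r-1}$ --- is exactly the hard step, and the proposal does not supply it. The target fact you cite (a simple binary matroid closed under taking the third point of every line is a projective geometry) is fine, but the entire difficulty is proving that no third points are missing, and ``iterating the completion axiom along the structure inherited from $PG(r-2,2)$'' is not an argument: the theta-graphs you would need to build in $M$ must avoid using the very points whose existence is in question, and ruling out the ``deficient'' configurations is precisely what the paper's Proposition~\ref{pginflater} (with its sublemma~\ref{ellpyramids} classifying the rank-$4$ flats containing the defective line, plus Lemma~\ref{connhyp} and Corollary~\ref{prfromf7s}) accomplishes. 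As it stands, your proposal reduces the theorem to two claims --- Case 1's soundness and Case 2's promotion step --- the first of which is argued by an invalid method and the second of which is only a hope; so the proof does not go through.
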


The proof of this theorem relies on the next two propositions.

\begin{proposition}
\label{mkorpgprop}
If $M$ is a simple matroid in $\Theta_3$ and $M$ has a spanning $M(K_{r+1})$-restriction, then $M\cong M(K_{r+1})$ or $M\cong P_r$.
\end{proposition}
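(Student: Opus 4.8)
The plan is to induct on $r$, or more precisely on $|E(M)| - |E(M(K_{r+1}))|$, starting from the observation that if $M$ has no elements outside the spanning clique $M(K_{r+1})$ then $M\cong M(K_{r+1})$ and we are done. So suppose $M$ properly contains a spanning restriction $N\cong M(K_{r+1})$ on vertex set $\{v_0,v_1,\dots,v_r\}$, and let $e\in E(M)-E(N)$. Since $M$ is binary and $N$ is spanning, $e$ lies in the closure of $N$, so $e$ is the sum (in the binary sense) of a unique circuit $C_e$ of $N$; equivalently, writing edges of $K_{r+1}$ as pairs, $C_e$ is a cycle $v_{i_1}v_{i_2}\cdots v_{i_k}v_{i_1}$ and $C_e\cup e$ is a circuit of $M$. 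The first key step is to show that $C_e$ can be taken to be a triangle of $N$, i.e.\ that $e$ is parallel (in $\si(M)$) to an edge of the clique. If $C_e$ has length at least $4$, say it uses vertices including $v_a,v_b,v_c$ in cyclic order with the arc from $v_b$ to $v_c$ (through none of the other listed vertices) having length at least $2$, then the three paths in $N$ from $v_a$ to (the segment between) together with the chords of $K_{r+1}$ give a theta-graph: concretely, $e$ together with the two sub-paths of $C_e$ from $v_a$ to $v_b$ and from $v_a$ to $v_c$, plus the edge $v_bv_c$ of $N$, exhibit three internally disjoint paths between a pair of vertices, and since $M$ is $\Theta_3$-closed this theta-graph must be completed. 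Chasing which element completes it, and using that $M$ is simple, forces a shorter circuit through $e$, and iterating drives $C_e$ down to a triangle. Thus every element of $M$ outside $N$ is parallel in $\si(M)$ to an edge of the clique.

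The second step handles the interaction among these new elements. Since $\si(M)$ is also in $\Theta_3$ by Proposition~\ref{basics}(i) and has the same spanning clique, we may assume $M=\si(M)$, and now each $e\in E(M)-E(N)$ is "parallel to an edge" only in the combinatorial sense that $\{e,v_iv_j\}$ spans the same rank-$1$-more flat — but as $M$ is simple this is impossible unless $e$ itself behaves like a new chord creating a multi-edge in $K_{r+1}$. The right way to phrase it: in $M(K_{r+1})$ there is exactly one element in the closure of each pair of vertices, so adding $e$ with $C_e$ a triangle $v_iv_jv_k$ means $e$ lies in the plane spanned by $\{v_iv_j,v_jv_k,v_iv_k\}$ but outside the three lines; that is, $M|\{v_iv_j,v_jv_k,v_iv_k,e\}\cong U_{2,4}$-free rank-$3$ configuration, namely the Fano plane minus a point or the full Fano plane $P_3$. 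The key claim is then: the set $S$ of elements parallel-completing the cliques, together with $N$, forms a flat isomorphic to $P_r$; equivalently, once one triangle of $N$ is "filled in" to a Fano plane, $\Theta_3$-closedness forces every triangle to be filled, and the whole thing closes up to projective space. To prove this, take a filled triangle $\{v_iv_j,v_jv_k,v_ik, e\}$ and any fourth vertex $v_\ell$; the theta-graphs on $v_i,v_j$ using the paths $v_i v_\ell v_j$, $v_iv_j$, and $v_i e'$-style routes (where $e'$ is a point of the filled Fano plane) must be completed, and the completing elements are new points forcing the Fano plane on $\{v_i,v_j,v_\ell\}$ as well; propagating this across all triples shows $E(M)$ is closed under the binary sum operation restricted to these triangles, hence $M$ is a binary projective geometry of rank $r$, i.e.\ $M\cong P_r$.

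The main obstacle I anticipate is the bookkeeping in the second step: showing that filling \emph{one} triangle forces \emph{all} of them, and that the result is exactly $P_r$ with no intermediate configurations surviving. Binary matroids between $M(K_{r+1})$ and $PG(r-1,2)$ can be subtle — e.g.\ one must rule out partial fillings that happen to be $\Theta_3$-closed — and the cleanest route is probably to argue that the closure of the union of $N$ with a single extra point $e$ (with $C_e$ a triangle) already contains a $P_3$-restriction, then observe that a simple $3$-connected matroid is handled by Theorem~\ref{mainres3conn}, but since we can't invoke that here (it's proved using this proposition), we instead argue directly: any simple binary matroid that contains $M(K_{r+1})$ as a spanning restriction and contains one $PG(2,2)$-restriction meeting that clique in a triangle must, by repeatedly completing theta-graphs, contain $PG(2,2)$-restrictions on every triple of the $r+1$ "vertex" points, and the span of all these is a flat of $M$ of rank $r$ and size $2^r-1$, hence all of $E(M)$ by spanning, so $M\cong P_r$. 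The delicate point is confirming that no new elements lying \emph{outside} $\cl(N)$ can appear — but this is immediate since $N$ is spanning, so in fact every element of $M$ is in $\cl_M(N) = E(M)$, and the only question was the internal structure, which the theta-graph propagation resolves.
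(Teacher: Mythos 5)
Your outline has two genuine gaps, one in each step. In the first step, the stated conclusion cannot be right: a graph triangle of $N$ sums to zero over $\F_2$, so no element of a simple matroid has a triangle of $N$ as the rest of its circuit, and ``parallel in $\si(M)$ to an edge of the clique'' is impossible outright because $M$ is simple, so $\si(M)=M$. More substantively, completing a theta-graph never shortens the circuit $C_e$ of the element $e$ you started with --- it adds a \emph{new} element to $M$ --- so ``iterating drives $C_e$ down'' conflates shrinking $e$'s fundamental circuit with producing new elements that lie closer to the clique. What is actually true (and what the paper proves as its assertion (i), after representing the clique by the weight-one and weight-two vectors) is that the presence of any element of weight at least four forces a new element of weight three; it is false that every element of $E(M)-E(N)$ lies on a line spanned by two clique elements (in $P_r$ with $r\geq 5$ most points do not), so your step-one claim cannot even serve as an intermediate statement on the way to $M\cong P_r$.

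The second and larger gap is at the end. Even granting that theta-graphs force an $F_7$-restriction on every vertex triple, the union of all those Fano planes consists only of the points of weight at most three, which for $r\geq 4$ is a proper subset of $P_r$; whether that intermediate configuration is itself $\Theta_3$-closed is exactly the kind of ``partial filling'' you flagged but never ruled out. Your closing sentence --- that the span of these planes is a flat of $M$ of size $2^r-1$, hence equals $E(M)$ --- conflates the span computed in $P_r$ with a flat of $M$: the assertion that every point of that span belongs to $E(M)$ is precisely the proposition being proved, so the argument is circular. The paper closes this gap with two further weight-based inductive assertions: if $M$ contains every point of weight $k-1$ and some point of weight $k$, then it contains every point of weight $k$ (proved by a minimum-distance argument that exhibits an explicit theta-graph), and if $M$ contains every point of weight $k$ with $3\leq k<r$, then it contains a point of weight $k+1$. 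Your ``propagate the filled triangles'' idea corresponds only to the $k=3$ instance of the first assertion, and nothing in your sketch produces the points of weight four and higher; without analogues of both assertions the proof does not close.
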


\begin{proof}
Take a standard binary representation for $P_r$, and view $M$ as the restriction of $P_r$ to the set $X$ of vectors. Recall that the number of nonzero entries of a vector is its \emph{weight}, and that the \emph{distance} between two vectors is the number of coordinates upon which they disagree. Because $M$ has an $M(K_{r+1})$-restriction, we may assume that $X$ contains the set $Z$ of vectors of weight one or two. We may assume that $Z\neq X$. Then $M$ has an element $e$ of weight at least three. We shall establish that $M\cong P_r$ by proving the following three assertions.
\begin{itemize}
\item[(i)]{$M$ has an element of weight three;}
\item[(ii)]{if the matroid $M$ has every element of weight $k-1$ and an element of weight $k$, for some $k$ exceeding two, then $M$ has every element of weight $k$; and}
\item[(iii)]{if $M$ has every element of weight $k$, where $3\leq k< r$, then $M$ has an element of weight $k+1$.}
\end{itemize}

Let $e_i$ denote the weight-1 element whose nonzero entry is in the $i$th position. To show (i), we may assume $e$ has weight $k\geq 4$. Say $e=e_1+e_2+\cdots+e_k$. Let $Y=\{e,e_1,e_2,e_4,e_5,\dots,e_k,e_1+e_3,e_2+e_3\}$. Then $M\vert Y$ is a theta-graph having arcs $\{e,e_4,e_5,\dots,e_k\}$, $\{e_1,e_2+e_3\}$, and $\{e_2,e_1+e_3\}$. This theta-graph forces $e_1+e_2+e_3$ to be an element of $M$, so (i) holds.

To prove (ii), we may assume $k<r$. Suppose $g$ is an element of weight $k$ not in $M$, and let $f$ be an element of weight $k$ in $M$ with minimum distance from $g$. Let $s$ label a row where $f$ is 1 and $g$ is 0, and let $t$ label a row where $g$ is 1 and $f$ is 0. Next, as $k\geq 3$, there are two additional rows, $u$ and $v$, distinct from $s$ where $f$ is 1. Now, the set $\{f,e_u,e_v,e_s+e_t\}$ is independent, so the arcs $\{f, e_s+e_t\}$, $\{e_u, f+e_u+e_s+e_t\}$, and $\{e_v,f+e_v+e_s+e_t\}$ form a theta-graph in $M$. This theta-graph implies that $f+e_s+e_t$ belongs to $M$. However, $f+e_s+e_t$ has weight $k$ and is a smaller distance from $g$ than $f$, a contradiction. Thus (ii) holds.

Finally, let $f$ be an element of weight $k+1$ for some $k$ with $3\leq k<r$. By symmetry, we may assume that the set of rows in which $f$ is nonzero contains $\{1,2,3\}$. Then $\{f,e_1,e_2,e_3\}$ is independent in $M$, and the sets $\{e_1,f+e_1\}$, $\{e_2,f+e_2\}$, and $\{e_3,f+e_3\}$ are the arcs of a theta-graph in $M$. This theta-graph shows that $f$ belongs to $M$. Thus (iii) holds. Hence the proposition holds as well.
\end{proof}

The second proposition that we use to prove Theorem~\ref{mainres3conn} will follow from the following three results.

\begin{lemma}
\label{pglift}
Let $M$ be a simple rank-$r$ matroid in $\Theta_3$. Suppose that
\begin{itemize}
\item[(i)]$r\geq 4$;
\item[(ii)] $E(M)$ has a subset $P$ such that $M\vert P\cong P_{r-1}$; and
\item[(iii)]$E(M)-P$ contains at least three elements.
\end{itemize}
Then $M\cong P_r$.
\end{lemma}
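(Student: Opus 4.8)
The plan is to show that $M$ has no element of weight exceeding that of the elements already guaranteed, and then to bootstrap up to a full projective geometry by a weight argument of the same flavour as Proposition~\ref{mkorpgprop}. Fix a standard binary representation so that $P = P_{r-1}$ sits inside a hyperplane $H$ of $PG(r-1,2)$, say the hyperplane $\{x : x_r = 0\}$, and so every vector of $P$ has last coordinate $0$. Let $e$ be an element of $E(M) - P$. Since $M$ is simple and $P$ is a hyperplane, $e$ has $x_r = 1$; after a coordinate change fixing $H$, I may take $e = e_r$ (the weight-one vector with its single $1$ in position $r$). Now let $f, g$ be two further elements of $E(M) - P$, which exist by hypothesis (iii); each has last coordinate $1$. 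The key point will be that $\{e_r, f, g\}$ together with $P$ forces, via theta-graphs, that every vector of weight one in the $r$th coordinate direction — hence eventually every vector of $PG(r-1,2)$ — lies in $M$.

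First I would argue that we may assume $E(M) \supseteq P \cup \{e_r\}$ and that $E(M)-P$ has a third element $h$ with $h + e_r \in P$: indeed, for any $h \in E(M) - P$, the vector $h + e_r$ has last coordinate $0$, so it lies in $P$; thus $h$, $e_r$, and $h + e_r$ form a triangle, and $h + e_r$ is one of the (many) elements of $P$. So choose distinct $h_1, h_2 \in E(M) - P \setminus \{e_r\}$ (using (iii)) and set $p_i = h_i + e_r \in P$. If $p_1 = p_2$ then $h_1 = h_2$, a contradiction; so $p_1 \neq p_2$. Since $r - 1 \geq 3$, inside $P \cong P_{r-1}$ we can find three weight-small elements $a, b, c$ of $P$ such that $\{e_r, a, b, c\}$ is independent and the analysis of Proposition~\ref{mkorpgprop}(iii) applies with $e_r$ playing the role of the "new" vector: the sets $\{a, e_r + a\}$, $\{b, e_r + b\}$, $\{c, e_r + c\}$ are the arcs of a theta-graph in $M$ provided the six listed vectors lie in $M$. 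The vectors $a, b, c$ lie in $P$; the vectors $e_r + a, e_r + b, e_r + c$ have last coordinate $1$, and the argument needs them to be in $M$ a priori, which they are not automatically — so this naive approach needs adjustment.

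The cleaner route, and the one I would actually carry out, is to first prove that $M$ contains $e_r + e_i$ for every $i < r$. Start from $e_r$ and one further element $h_1 \in E(M) - P$, so $p_1 := h_1 + e_r \in P$ has positive weight; pick a coordinate $j$ with $(p_1)_j = 1$. Using that $P$ is a full projective geometry on $H$, build a theta-graph on the independent set $\{h_1, e_j, (\text{stuff in } P)\}$ whose completion is $e_r + e_j$; concretely, $h_1 = e_r + p_1$, and with $p_1 = e_j + (p_1 - e_j)$ we get a theta-graph with arcs $\{h_1, e_j\}$ (completed by $e_r + p_1 - e_j$, which has last coordinate $1$...) — again the completing element is off $H$. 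The honest fix is to iterate: each theta-graph in which exactly one arc pokes off $H$ forces exactly one new off-$H$ element, and because $P$ is the \emph{full} geometry on $H$ we have complete freedom to choose the on-$H$ arcs, so a short induction on the weight of the $H$-part shows that once $e_r \in M$, every $e_r + e_i \in M$, and then every vector $e_r + v$ with $v \in H$ lies in $M$ by applying Proposition~\ref{basics}(ii) to the flat $\langle e_r \rangle \cup H$ together with part~(iii) of that proposition after contracting — i.e. $\si(M / e_r)$ is in $\Theta_3$, contains $P_{r-1}$ spanningly (once we know all $e_r + e_i \in M$), and then we are reduced to Proposition~\ref{mkorpgprop}.

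The main obstacle, as the discussion above shows, is bridging from "one element off the hyperplane $H$" to "all weight-one-off-$H$ elements $e_r + e_i$": a single theta-graph only ever produces one new vector, and the most natural theta-graphs have their completing element off $H$, so one cannot immediately invoke that $P$ is closed. I expect the resolution to be a careful induction — ordered by the weight of the $H$-projection — in which hypothesis~(iii), giving \emph{three} elements off $H$, is exactly what is needed to seed enough off-$H$ vectors to get the induction started (mirroring how Proposition~\ref{mkorpgprop}(iii) needs three weight-one vectors $e_1, e_2, e_3$ in the support of $f$). Once all of $\langle e_r \rangle \vee H$ minus $0$ is shown to be in $M$, the conclusion $M \cong P_r$ is immediate since $\langle e_r \rangle \vee H = PG(r-1,2)$, the whole geometry; hypotheses (i) and (iii) are what rule out the small sporadic cases where the induction has no room to run.
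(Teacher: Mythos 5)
There is a genuine gap, and you identify it yourself: you never bridge from ``three elements off the hyperplane'' to ``all points off the hyperplane,'' and the iteration you sketch is not carried out. Worse, the intermediate claim you lean on --- that once $e_r\in E(M)$ a short induction forces every $e_r+e_i$ into $E(M)$ --- is false: if $E(M)=P\cup\{e_r\}$ (or even $P$ plus two off-hyperplane points), then every theta-graph has at least two arcs contained in $P$, so its completing element is the sum of such an arc, which lies on the hyperplane and hence already in $P$; thus these matroids are themselves $\Theta_3$-closed and nothing new is forced. This is exactly why hypothesis (iii) demands three elements, and your argument never exploits them in the right configuration.

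The missing idea is to aim the theta-graph at the unknown point rather than at the known ones. View $M\subseteq P_r$ with $P$ spanning the hyperplane $H$, let $\{e,f,g\}\subseteq E(M)-P$, and let $p$ be any point of $P_r$ off $H$ with $p\notin\{e,f,g\}$. For each $x\in\{e,f,g\}$ the third point $x+p$ of the line through $x$ and $p$ lies on $H$, hence in $P\subseteq E(M)$; so the three arcs $\{e,e+p\}$, $\{f,f+p\}$, $\{g,g+p\}$ already lie in $M$, and provided $p\notin\cl(\{e,f,g\})$ they form a theta-graph (rank $4$, an $M(K_{2,3})$) whose only completing element is $p$. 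Since $M\in\Theta_3$, this forces $p\in E(M)$ --- one theta-graph per missing point, no induction on weights needed. The single exceptional point $q=e+f+g$ (the unique off-$H$ point coplanar with $\{e,f,g\}$) is then handled by noting that $P_r\backslash q$ is not in $\Theta_3$ when $r\geq 4$, so $E(M)$ must contain $q$ as well, giving $M\cong P_r$ directly without any appeal to Proposition~\ref{mkorpgprop}. Your coordinate set-up and the observation that sums of two off-$H$ points land in $P$ are the right raw materials, but without the ``three lines through the target point $p$'' construction the proof does not go through.
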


\begin{proof}
View $M$ as a restriction of $P_r$, and let $\{e,f,g\}$ be a subset of $E(M)-P$. Let $p$ be a point in $E(P_r)- P$ that is not in $\{e,f,g\}$. Observe that, for each $x$ in $\{e,f,g\}$, the third point on the line in $P_r$ containing $\{x,p\}$ is in $P$. Thus there are three lines of $M$ that meet at $p$. Provided $p$ is not coplanar with $\{e,f,g\}$, these lines define a theta-graph in $M$ that is completed by $p$, so $p$ is in $M$. It remains to show that the point $q$ of $E(P_r)- (P\cup \{e,f,g\})$ that is coplanar with $\{e,f,g\}$ belongs to $M$. But one easily checks that $P_r\del q$ is not in $\Theta_3$ when $r\geq 4$. Thus $M\cong P_r$.
\end{proof}

\begin{corollary}
\label{prfromf7s}
Let $M$ be a simple rank-$r$ matroid in $\Theta_3$ with $r\geq 3$. If $M$ has a basis $B$ and an element $b$ in $B$ so that, for each $\{x,y\}\subseteq B-b$, the set $\{b,x,y\}$ spans an $F_7$-restriction of $M$, then $M\cong P_r$.
\end{corollary}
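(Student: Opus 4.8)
The plan is to induct on $r$, using Lemma~\ref{pglift} as the engine once we can produce a spanning projective-geometry restriction one rank down. The base case $r=3$ is immediate: if $B=\{b,x,y\}$ then $\{b,x,y\}$ itself spans an $F_7$-restriction of $M$, and since this restriction is spanning and $M$ is simple, $M\cong F_7=P_3$.

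For the inductive step, suppose $r\geq 4$ and fix the basis $B$ and the element $b\in B$ with the stated property. First I would identify a hyperplane on which to apply the inductive hypothesis. Pick any $b'\in B-b$ and let $B'=B-b'$; this is a basis of the rank-$(r-1)$ flat $F=\cl(B')$. I claim $M|F$ satisfies the hypotheses of the corollary with distinguished basis element $b$. Indeed, for any $\{x,y\}\subseteq B'-b=B-\{b,b'\}$, the hypothesis on $M$ says $\{b,x,y\}$ spans an $F_7$-restriction of $M$; this restriction lies in $F$ (it is spanned by a subset of $B'$), so it is an $F_7$-restriction of $M|F$. By Proposition~\ref{basics}(ii), $M|F\in\Theta_3$, and $M|F$ is simple of rank $r-1\geq 3$, so the inductive hypothesis gives $M|F\cong P_{r-1}$. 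Thus $E(M)$ has a subset $P$ (namely $F$, or rather the point set of $M|F$) with $M|P\cong P_{r-1}$.

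It remains to check condition (iii) of Lemma~\ref{pglift}, that $E(M)-P$ contains at least three elements. The distinguished element $b'$ is not in $P=\cl(B')$ since $B$ is independent. Now run the same argument with a different omitted element: for two further choices $b'',b'''\in B-b$ distinct from $b'$ (these exist since $|B-b|=r-1\geq 3$), the elements $b''$ and $b'''$ each lie outside $\cl(B-b'')$ and $\cl(B-b''')$ respectively — but I need all three to lie outside the \emph{single} flat $F=\cl(B-b')$. That is clear: $b''$ and $b'''$ are both in $B'\subseteq F$, so they \emph{are} in $P$, which is the wrong direction. So instead I should argue directly: since $\{b,b',b''\}$ spans an $F_7$-restriction of $M$, the line of $M$ through $b$ and $b''$ contains a third point $z$; and $z\notin F$ would give a point of $E(M)-P$ — but $z$ could lie in $F$. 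The cleaner route is: the rank-$2$ flat $\cl(\{b',b''\})$ inside the $F_7$ spanned by $\{b,b',b''\}$ has three points, two of which ($b'$ might be in $F$...) — let me instead simply count. Take the $F_7$-restriction $R$ spanned by $\{b,b',b''\}$; it has $7$ points, and $R\cap F$ is a flat of $R$ not containing the independent set $\{b',b''\}\cup\ldots$; actually $b'\notin F$, so $R\cap F$ is a proper flat of $R$ avoiding $b'$, hence $|R\cap F|\leq 3$, so $|R-F|\geq 4$. These four points all lie in $E(M)-P$, giving condition (iii).

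Then Lemma~\ref{pglift} applies directly: $M$ is simple, rank $r\geq 4$, has a $P_{r-1}$-restriction on $P$, and $E(M)-P$ has at least three elements, so $M\cong P_r$, completing the induction. The main obstacle is the bookkeeping in the previous paragraph — ensuring that omitting one basis element still leaves at least three elements outside the resulting hyperplane-flat — and the right way to handle it is to exploit the richness of the $F_7$-restrictions ($7$ points each) rather than counting basis elements, since an $F_7$ spanned by three independent points of $B$ must spill at least four points off any hyperplane missing one of those three points.
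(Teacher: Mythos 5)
Your proof is correct and takes essentially the same route as the paper's: induct on $r$, apply the inductive hypothesis to $M\vert\cl(B-b')$ to obtain a $P_{r-1}$-restriction, and then invoke Lemma~\ref{pglift}, verifying condition (iii) via an $F_7$-restriction spanned by $\{b,b',b''\}$, which meets the flat $\cl(B-b')$ in a proper flat of at most three points and so has at least four points outside it. The paper's own (terser) proof makes exactly this choice, observing that the $F_7$ spanned by $\{b_1,b_2,b_r\}$ contains three elements avoiding $\cl(B-b_r)$.
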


\begin{proof}
Let $B=\{b_1,b_2,\dots,b_r\}$ with $b=b_1$. If $r=3$, then the result is immediate, so suppose $r\geq 4$. By induction, $M\vert\cl(B-b_r)$ is isomorphic to $P_{r-1}$. Since $M\vert\cl(\{b_1,b_2,b_r\})\cong F_7$, we see that this restriction contains an independent set of three elements that avoids $\cl(B-b_r)$. Lemma~\ref{pglift} now implies that $M\cong P_r$.
\end{proof}

The next result was proved by McNulty and Wu~\cite[Lemma 2.10]{mcnultywu}.

\begin{lemma}
\label{connhyp}
Let $M$ be a $3$-connected binary matroid with at least four elements. Then, for any two distinct elements $e$ and $f$ of $M$, there is a connected hyperplane containing $e$ and avoiding $f$.
\end{lemma}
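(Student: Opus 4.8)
The plan is to prove the theorem by an extremal argument: among the connected flats of $M$ that contain $e$ and avoid $f$, choose one of maximum rank and show that it must be a hyperplane.

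Since $M$ is $3$-connected with $|E(M)|\ge 4$ it is simple, so $\{e\}$ is a flat, it is connected, it contains $e$, and (as $e\ne f$) it avoids $f$; hence there is a maximum rank $\rho$ of a connected flat containing $e$ and avoiding $f$, and I fix such a flat $F$. I will use repeatedly the following routine fact: if $G$ is a flat of $M$ with $M|G$ connected and $P$ is a point of $M/G$ with $|P|\ge 2$ (so that $\cl_M(G\cup p)=G\cup P$ for each $p\in P$), then $M|(G\cup P)$ is connected — each $p\in P$ is parallel in $M/G$ to some $p'\in P$, so some circuit of $M$ is contained in $G\cup\{p,p'\}$ and, since $M$ is simple, meets $G$, whence $p$ lies in the component of $G$ in $M|(G\cup P)$.

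Now suppose for a contradiction that $\rho\le r(M)-2$; then $M/F$ has rank at least $2$. Let $P_f$ be the point (parallel class) of $M/F$ containing the image of $f$. If some point $P$ of $M/F$ with $P\ne P_f$ has $|P|\ge 2$, then $F\cup P=\cl_M(F\cup P)$ is, by the fact above, a connected flat of rank $\rho+1$ that contains $e$ and avoids $f$ (the image of $f$ lies in $P_f\ne P$), contradicting maximality of $\rho$. So every point of $M/F$ other than $P_f$ is a singleton, say $\{g_1\},\dots,\{g_m\}$. No $g_i$ is a coloop of $M/F$, for otherwise $g_i\notin\cl_M(E(M)\setminus g_i)$, making $g_i$ a coloop of $M$. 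When $M/F$ has rank $2$, $\si(M/F)$ is a simple binary rank-$2$ matroid, hence has at most three points, so $m\le 2$; the case $m=1$ is impossible (then $g_1$ is a coloop of $M/F$), and if $m=2$ then $\{g_1,g_2\}$ spans $M/F$, so $E(M)\setminus\{g_1,g_2\}=F\cup P_f$ has rank $\rho+1=r(M)-1$ and $\sqcap(\{g_1,g_2\},E(M)\setminus\{g_1,g_2\})=1$; as both sides have at least two elements this is a $2$-separation of $M$, a contradiction. When $M/F$ has rank at least $3$ one wants, analogously, a pair of singleton points $\{g_i\},\{g_j\}$ whose deletion from $M/F$ lowers its rank, so that $E(M)\setminus\{g_i,g_j\}$ has rank at most $r(M)-1$ and again yields a $2$-separation; producing such a pair (or a slightly larger collinear configuration) from $3$-connectivity of $M$ is the part I expect to require the most care, the awkward instances being those in which $\si(M/F)$ is itself highly connected (for example projective), where one must exploit that the class $P_f$ has at least two elements (since $f$ is not a coloop of $M/F$) and is nontrivially glued to $F$.

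As a sanity check, in the graphic case the connected hyperplanes of $M(G)$ are exactly the complements of the vertex bonds $\partial(v)$, because $M(G)|(E(G)\setminus\partial(v))=M(G-v)$ and $G-v$ is $2$-connected when $G$ is $3$-connected; one then takes $v$ to be an end of the edge $f$ that is not an end of $e$, which exists since $G$ is simple, and the same choice settles the wheels. An alternative to the extremal argument is to induct on $|E(M)|$: using Tutte's wheels-and-whirls theorem (whirls are not binary, and wheels are handled as just described) one tries to delete or contract an element $g\notin\{e,f\}$ preserving $3$-connectivity and to lift a connected hyperplane of the minor back to $M$ by taking its closure in $M$ — deletion lifts cleanly, but lifting through a contraction re-encounters the same obstacle, now visible as $g$ possibly being a coloop of the lifted hyperplane.
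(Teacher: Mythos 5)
First, a point of comparison: the paper does not prove this lemma at all --- it is quoted from McNulty and Wu (their Lemma 2.10) --- so there is no internal argument to match; your proposal has to stand on its own. Judged that way, it has a genuine gap, and it is exactly the one you flag: the case $r(M/F)\ge 3$ is the entire content of the lemma and is left open. Everything before that is sound (the ``routine fact,'' the reduction to ``every point of $M/F$ other than $P_f$ is a singleton,'' the exclusion of coloops, and the rank-$2$ case), but the route you sketch for the remaining case cannot be completed from the information you have in hand. You hope to produce two singleton points $g_i,g_j$ of $M/F$ whose removal drops the rank of $E(M)$; such a pair would contain a cocircuit of size at most two, so its existence would indeed contradict $3$-connectivity and finish the proof. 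The trouble is that the local structure you derived from maximality does not force such a pair, nor any $2$-separation. A concrete witness: let $M=M(Q_3)$ be the cycle matroid of the cube, let $F=\{e\}$ for an edge $e$, and let $f$ be any other edge. Since $Q_3$ is triangle-free, $M/e$ is simple, so every point of $M/F$ (including the one containing $f$) is a singleton; $M/e$ has no coloops; $M$ is $3$-connected; and $r(M/F)=6\ge 3$. Thus all the hypotheses you carry into the stuck case are simultaneously satisfiable, so no contradiction can be extracted from them alone.

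What this shows is that the only way you use maximality --- ruling out nontrivial parallel classes of $M/F$ other than the one containing $f$, i.e.\ ruling out connected flats of rank $\rho+1$ that contain $F$ --- is too weak: a maximum-rank connected flat containing $e$ and avoiding $f$ need not be reachable by one-rank extensions of a smaller one, so any completion must play maximality against connected flats that do not arise as $\cl(F\cup P)$ for a point $P$ of $M/F$ (for example, closures of $F$ together with two or more of the $g_i$, where you must then control both whether $f$ falls into the closure and whether the restriction stays connected), or abandon the extremal scheme altogether. That additional work is essentially the substance of McNulty and Wu's proof, which the paper simply cites. Your graphic-case sanity check and the closing remark about lifting hyperplanes through contractions are correct observations, but they do not close this gap.
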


For a simple binary matroid $M$, we now define the smallest $\Theta_3$-closed matroid whose ground set contains $E(M)$. Let $M_0=M$ and $r(M)=r$. Suppose $M_0, M_1, \dots, M_k$ have been defined. The simple binary matroid $M_{k+1}$ is obtained from $M_k$ by ensuring that, whenever $T$ is an incomplete theta-graph of $M_k$, the element $x$ that completes $T$ is in $E(M_{k+1})$. Since each $M_i$ is a restriction of $P_r$, there is a $j$ for which $M_{j+1}=M_j$. When this first occurs, we call $M_j$ the \emph{$\Theta_3$-closure} of $M$. Evidently this is well defined. By associating $M$ with its ground set, the $\Theta_3$-closure is a closure operator (but not necessarily a matroid closure operator) on the set of subsets of the ground set of any projective geometry containing $M$.

\begin{proposition}
\label{pginflater}
Let $M$ be a simple $3$-connected matroid in $\Theta_3$, and let $k$ be an integer exceeding two. If $M$ has a simple minor $N$ whose $\Theta_3$-closure is $P_k$, then $M$ is a projective geometry.
\end{proposition}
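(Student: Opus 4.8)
The plan is to use the hypothesis to locate a large projective-geometry restriction inside $M$, and then to bootstrap this up to all of $M$. Write $N = M/C\setminus D$. Since contractions preserve membership in $\Theta_3$, both $M/C$ and $\si(M/C)$ lie in $\Theta_3$, and $N$ is isomorphic to a restriction of $\si(M/C)$. The $\Theta_3$-closure of $N$ is built inside a projective geometry containing $\si(M/C)$ by repeatedly adjoining the (unique) element that completes an incomplete theta-graph; since every theta-graph of a restriction of a $\Theta_3$-closed matroid is completed within that matroid, each such element already lies in $\si(M/C)$. Hence $P_k$ is isomorphic to a restriction — and, after taking closure, a flat — of the minor $\si(M/C)$ of $M$. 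In particular $M$ has a minor $M'\in\Theta_3$ with a $P_k$-flat, where $k\ge 3$; equivalently, $M$ has an $F_7$-minor.

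The heart of the argument is a lemma, proved by induction on $m$: if $L$ is a simple rank-$(m+1)$ matroid in $\Theta_3$ with an element $c$ such that $\si(L/c)\cong P_m$ for some $m\ge 3$, then $L$ has an $F_7$-restriction on a flat. For $m\ge 4$, pull back a hyperplane of $\si(L/c)\cong P_m$, which is a copy of $P_{m-1}$, to a rank-$m$ flat $G$ of $L$ through $c$; then $L|G\in\Theta_3$ satisfies $\si((L|G)/c)\cong P_{m-1}$, and induction applies. For $m=3$, view $L$ as a restriction of $PG(3,2)$. Each line through $c$ meets $PG(3,2)\setminus L$ in at most one point, so if $L$ had no $F_7$-flat the set $PG(3,2)\setminus L$ would meet every hyperplane of $PG(3,2)$ and hence contain a line $\ell$, which must avoid $c$. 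In the plane $\langle c,\ell\rangle$, $L$ then consists of exactly $c$ and the points $c+p$ with $p\in\ell$; counting the points of $L$ forces a further pair $\{x, x+p\}\subseteq L$ with $p\in\ell$, and from such pairs one assembles an incomplete theta-graph of $L$ completed by a point of $\ell$, a contradiction.

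Armed with this lemma, one shows by induction on $|C|$ that if $M''\in\Theta_3$ has an $F_7$-minor $M''/C\setminus D$, then $M''$ has an $F_7$-restriction on a flat: for $C=\emptyset$ this is immediate, since $M''|E(F_7)\cong F_7$ and the closure of that set is a rank-$3$ flat carrying a spanning $F_7$-restriction, hence equal to $F_7$ after simplification; for $c\in C$, apply induction to $\si(M''/c)$, pull the resulting $F_7$-flat back to a rank-$4$ flat $\hat F=F\cup\{c\}$ of $M''$ with $\si((M''|\hat F)/c)\cong F_7$, and apply the lemma to $M''|\hat F$. Taking $M''=M$, we obtain an $F_7$-flat $F_0$ of $M$. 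Finally, a separate induction on $r(M)$ shows that a simple $3$-connected $\Theta_3$-closed matroid with an $F_7$-flat is a projective geometry: one enlarges a rank-$j$ flat isomorphic to $P_j$ (with $3\le j<r(M)$) to a rank-$(j+1)$ flat with at least three external elements and applies Lemma~\ref{pglift}, using that $3$-connectedness forbids the low-order separations that would otherwise arise and using Lemma~\ref{connhyp} and Corollary~\ref{prfromf7s} to treat the remaining configurations; when the flat is a hyperplane, its complement is a cocircuit of size at least three and Lemma~\ref{pglift} finishes.

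The two delicate points are the base case $m=3$ of the lemma — the explicit incomplete-theta-graph construction in $PG(3,2)$, which requires checking the few extremal configurations for $L$ — and the final induction on $r(M)$, where $3$-connectedness has to be combined with theta-graph arguments to rule out the possibility that every rank-$(j+1)$ flat through a $P_j$-flat has at most two elements outside it.
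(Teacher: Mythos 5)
The decisive gap is at your final step: the claim that a simple $3$-connected matroid in $\Theta_3$ with an $F_7$-flat is a projective geometry, which you propose to prove by growing a $P_j$-flat $F$ ($3\le j<r(M)$) to a rank-$(j+1)$ flat with at least three elements outside $F$ and then quoting Lemma~\ref{pglift}. Nothing in your sketch produces such a flat. The elements off $F$ are partitioned among the rank-$(j+1)$ flats containing $F$, and the hard case is precisely when every such flat contains at most two elements off $F$; then for each $e\notin F$ almost all of $\cl_P(F\cup e)-F$ is missing from $E(M)$, any theta-graph completed by a missing point $e+f$ can have at most two of its arcs inside $\cl_P(F\cup e)$, and one needs a third pair of points of $M$ summing to $e+f$ and lying outside that flat. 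Three-connectivity by itself does not supply it (it only handles your hyperplane case $j=r-1$, where the complement of $F$ is a cocircuit of size at least three), and ``$3$-connectedness forbids the low-order separations \dots using Lemma~\ref{connhyp} and Corollary~\ref{prfromf7s} to treat the remaining configurations'' is a gesture rather than an argument. This is exactly where the whole difficulty of the proposition lives: the paper's proof works under the far stronger hypothesis $\si(M/e)\cong P_{r-1}$, obtained from a Splitter Theorem induction, so that \emph{every} line through $e$ meets $E(M)$ again, and even then it needs the classification~\ref{ellpyramids} of the rank-$4$ flats through the deficient line, the statement~\ref{sswitcher}, a connected hyperplane from Lemma~\ref{connhyp}, and Corollary~\ref{prfromf7s} before Lemma~\ref{pglift} can be invoked. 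Your route (extract an $F_7$-flat from the minor, then grow projective flats upward) is genuinely different in outline, but at its crux it requires an argument of comparable depth that is simply not there.

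A secondary soft spot is the base case $m=3$ of your lemma. The inference ``$PG(3,2)\setminus L$ meets every hyperplane and hence contains a line'' is false for blocking sets in general: an elliptic quadric of $PG(3,2)$ meets every plane and contains no line. The implication does hold once combined with the hypothesis that each line through $c$ carries at most one missing point, but proving that combined statement is a real step (project from $c$, note that the projection of the missing set blocks $PG(2,2)$ and hence contains a line, then rule out the non-collinear lifts using the planes not through $c$), and your ``hence'' hides it. The remainder of that base case is fine: the count forcing a pair $\{x,x+p\}$ with $x\notin\cl_P(\{c\}\cup\ell)$ works, and the three pairs $\{c,c+p\}$, $\{c+p',c+p''\}$, $\{x,x+p\}$ do form an incomplete theta-graph completed by $p\in\ell$. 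Likewise your opening reduction (that the $\Theta_3$-closure $P_k$ of $N$ already sits inside $\si(M/C)$, so $M$ has an $F_7$-minor) agrees with the paper, and your induction transferring an $F_7$-minor to an $F_7$-flat is sound modulo the $m=3$ lemma. So the early stages are repairable; the proposal founders on the final induction, which is the actual content of the proposition.
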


\begin{proof}
Take subsets $X$ and $Y$ of $E(M)$ such that $M/X\del Y=N$ with $X$ independent and $Y$ coindependent in $M$. The matroid $M/X$ is in $\Theta_3$ and has $N$ as a spanning restriction. Therefore $M/X$ has $P_k$ as a restriction, so $P_k$ is a minor of $M$. From here, the proof is by induction on the rank, $r$, of $M$.

If $r=k$, the result is immediate, so assume $r>k$. By Seymour's Splitter Theorem, $P_k$ can be obtained from $M$ by a sequence of single-element contractions and deletions, all while staying 3-connected. Let $e$ be the first element that is contracted in this sequence. Note that $\si(M/e)$ is a 3-connected member of $\Theta_3$ that has $P_k$ as a minor. By induction, $\si(M/e)\cong P_{r-1}$. Fix an embedding of $M$ in $P_{r}$. We may assume that $M\not\cong P_r$. Then some line $\ell$ of $P_{r}$ through $e$ is not contained in $E(M)$. For each subset $Z$ of $E(P_r)$, let $\cl_P(Z)$ be its closure in $P_r$. Since $\si(M/e)\cong P_{r-1}$, there is an element $s$ of $E(M)$ that is in $\ell-\{e\}$. Let $t$ be the point of $P_{r}$ in $\ell-\{e\}$ that is not in $E(M)$.

\begin{sublemma}
\label{ellpyramids}
Let $F$ be a rank-$4$ flat of $M$ containing $\ell-t$. Then $M\vert F$ is isomorphic to one of $P(F_7,U_{2,3})$ or $F_7\oplus U_{1,1}$ where the $F_7$-restriction of $M\vert F$ contains $s$ but avoids $e$.
\end{sublemma}

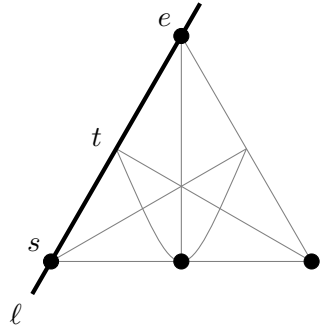
\begin{figure}
\begin{tikzpicture}

\foreach \x in {0,1,2} \draw[gray] (90+120*\x:2)--(-90+120*\x:1) (90+120*\x:2)--(210+120*\x:2);
\draw[gray] (150:1).. controls (-90:1.5) ..(30:1);

\foreach \x in {0,1,2} \draw[fill=black] (90+120*\x:2) circle (0.1);
\foreach \x in {0} \draw[fill=black] (-90+120*\x:1) circle (0.1);

\draw[ultra thick, shorten <= -0.5cm, shorten >= -0.5cm] (90:2)--(210:2);

\draw (90:2) node [above left] {$e$};
\draw (210:2) node[above left] {$s$};
\draw (150:1.3) node {$t$};

\draw (-2.2,-1.7) node {$\ell$};

\end{tikzpicture}
\caption{In the proof of \ref{ellpyramids}, each of $M\vert \pi_2$ and $M\vert \pi_3$ has this form.}
\label{ellplanes}
\end{figure}

To see this, first note that, by Proposition~\ref{basics}(ii), $M\vert F$ is in $\Theta_3$. Recall that each line of $\cl_P(F)$ through $e$ contains another point of $F$. Then there are three planes, $\pi_1$, $\pi_2$, and $\pi_3$, of $M\vert F$ containing $\ell-t$. Let $\mathcal{P}$ be this set of planes. Therefore, each plane in $\mathcal{P}$ has at least one pair of points that are not on $\ell$ such that these two points are collinear with either $s$ or $t$. Call such a pair of points a \emph{target pair}. The rest of the proof frequently relies of finding theta-graphs, particularly in rank 4. It maybe helpful to note that such a theta-graph will be isomorphic to $M(K_{2,3})$. Geometrically, this is three non-coplanar lines, each full except for shared point. This common point completes the theta-graph.

Suppose $\pi_1$ has a target pair collinear with $t$. Note that if two distinct planes in $\mathcal{P}$ each have a target pair collinear with $t$, then these pairs, along with $\{e,s\}$, are the arcs of an incomplete theta-graph in $M\vert F$, an impossibility. Consequently, neither $\pi_2$ nor $\pi_3$ has a target pair collinear with $t$, so both have the form in Figure~\ref{ellplanes}. The restriction of $M$ to $\pi_2\cup \pi_3$ is given in Figure~\ref{pi2pi3}.

The plane $\pi_1$ adds a pair of points collinear with $t$ to $M\vert (\pi_2\cup\pi_3)$, and one readily checks that the addition of this pair gives a restriction of $M\vert F$ that is isomorphic to $F_7\oplus_2 U_{2,3}$. A theta-graph of $M\vert F$ now gives that $M\vert F$ has a restriction isomorphic to $P(F_7,U_{2,3})$. It follows that $M\vert F\cong P(F_7, U_{2,3})$ otherwise, by Lemma~\ref{pglift}, $M\vert F\cong P_4$ and we obtain the contradiction that $t\in F$.

We may now suppose that no $\pi_i$ has a target pair collinear with $t$. It follows that each target pair in each $\pi_i$ is collinear with $s$, and that $e$ is the only element of $F$ outside of the target pairs. Observe that the target pairs must be coplanar as, otherwise, we can find an incomplete theta-graph in $M\vert F$. Thus $M\vert F\cong F_7\oplus U_{1,1}$. Noting that $e$ is not in the $F_7$-restriction of $M\vert F$, we conclude that~\ref{ellpyramids} holds.

\begin{figure}
\centering
\begin{tikzpicture}[style=thick]

\draw[fill=black] (-3.5,2.5) circle (0.1);
\draw[fill=black] (3.5,2.5) circle (0.1);
\draw[fill=black] (-1.75,2.25) circle (0.1);
\draw[fill=black] (1.75,2.25) circle (0.1);
\draw[fill=black] (0,2) circle (0.1);
\draw (0,2) node [below right] {$s$};
\draw[fill=black] (0,5) circle (0.1);
\draw (0,5) node [above right] {$e$};
\draw (0,3.5) node [above right] {$t$};

\draw (-5,4.5)--(0,5.5)--(5,4.5)--(5,0.5)--(0,1.5)--(-5,0.5)--(-5,4.5);
\draw (0,1.5)--(0,5.5);
\draw (-3.5,2.5)--(0,5) (-3.5,2.5)--(0,2);
\draw (3.5,2.5)--(0,5) (3.5,2.5)--(0,2);
\draw (-3.5,2.5)--(0,3.5)--(3.5,2.5);
\draw (1.75,3.75)--(0,2)--(-1.75,3.75);
\draw (-1.75,2.25)--(0,5)--(1.75,2.25);
\draw (-1.75,3.75).. controls (-2,1.75) ..(0,3.5);
\draw (1.75,3.75).. controls (2,1.75) ..(0,3.5);

\end{tikzpicture}
\caption{The matroid $M\vert(\pi_2\cup\pi_3)$ in the proof of~\ref{ellpyramids}.}
\label{pi2pi3}
\end{figure}
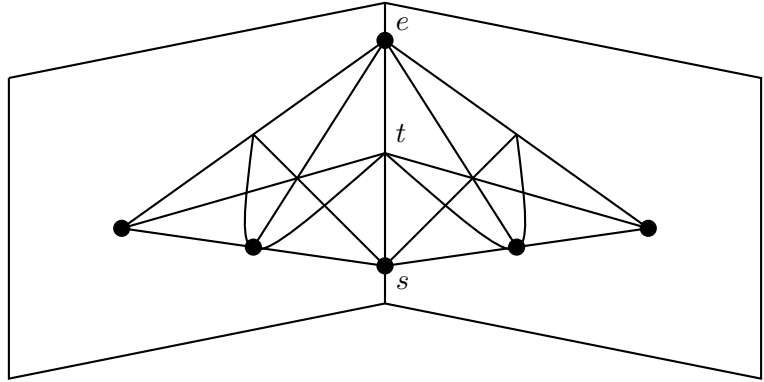

Since $M$ is 3-connected, it follows from~\ref{ellpyramids} that $r\geq 5$. By Lemma~\ref{connhyp}, there is a connected hyperplane $H$ of $M$ containing $s$ and avoiding $e$. Let $s=b_1$ and let $\{b_1,b_2,\dots,b_{r-1}\}$ be a basis $B$ of $M\vert H$. For distinct elements $i$ and $j$ of $\{2,3,\dots,r-1\}$, let $F_{i,j}$ denote the rank-4 flat $M\vert\cl(\{e,s,b_i,b_j\})$ of $M$. By~\ref{ellpyramids}, $M\vert  F_{i,j}$ is isomorphic to $F_7\oplus U_{1,1}$ or $P(F_7,U_{2,3})$. Let $X$ be the subset of $F_{i,j}$ such that $M\vert X\cong F_7$, and recall that $e\not\in X$ and $s\in X$. The hyperplane $H$ either contains $X$ or meets $X$ along one of the lines $\cl(\{s,b_i\})$ or $\cl(\{s,b_j\})$. We deduce the following.

\begin{sublemma}
\label{sswitcher}
For each pair $\{i,j\} \subseteq \{2,3,\dots,r-1\}$, at least one of $s+b_i$ or $s+b_j$ is in $E(M)$.
\end{sublemma}

Suppose $s+b_2$ is not in $E(M)$. By~\ref{sswitcher}, the element $s+b_i$ belongs to $E(M)$ for every $i$ in $\{3,4,\dots,r-1\}$. Consequently, for each pair $\{i,j\}$ in $\{3,4,\dots,r-1\}$, the hyperplane $H$ contains the copy of $F_7$ in $M\vert F_{i,j}$, and this $F_7$ is spanned by $\{s,b_i, b_j\}$. Corollary~\ref{prfromf7s} now implies that $M\vert \cl(B-b_2)\cong P_{r-2}$.

Now, since $M\vert H$ is connected, $H$ contains an element $f$ that is not in $\cl(B-b_2)\cup b_2$. The line $\cl(\{b_2,f\})$ meets the projective geometry $\cl(B-b_2)$, so $b_2+f$ is also in $M$. Now consider $Y=\cl(\{e,s,b_2,b_2+f\})$. The intersection $H\cap Y$ contains the line $\{b_2,f,b_2+f\}$ and also the element $s$. By applying~\ref{ellpyramids} to $M\vert Y$, we see that $H\cap Y$ is an $F_7$-restriction containing $s$ and $b_2$. Since $s+b_2$ is not in $E(M)$, we have a contradiction.

We conclude that $s+b_i$ is in $E(M)$ for every $i$ in $\{2,3,\dots,r-1\}$. The flat $M\vert F_{i,j}$ now meets $H$ in an $F_7$-restriction for every pair $\{i,j\}\subseteq \{2,3,\dots,r-1\}$ so, by Corollary~\ref{prfromf7s}, $M\vert H$ is a projective geometry. Finally, as $M$ is 3-connected, there is an independent set of three elements in $E(M)$ avoiding $H$. Hence, by Lemma~\ref{pglift}, $M\cong P_r$.
\end{proof}

We are now ready to prove the main result of this section.

\begin{proof}[Proof of Theorem~\ref{mainres3conn}]
Let $r$ be the rank of $M$. If $M$ is graphic, then Theorem~\ref{JMtheorem} gives that $M\cong M(K_{r+1})$, so we may assume that $M$ is not graphic. Thus $M$ has a minor $N$ isomorphic to $F_7$, $F_7^\ast$, $M^\ast(K_{3,3})$, or $M^\ast(K_5)$. By Proposition~\ref{pginflater}, it now suffices to show that the $\Theta_3$-closure, $\Theta(N)$, of $N$ is a projective geometry.

This is immediate when $N\cong F_7$, so suppose $N$ is isomorphic to $F_7^\ast$, labelled as in Figure~\ref{f7star}. The theta-graphs of $N$ imply that, in $\Theta(N)$, the plane containing $\{1,2,5,6\}$ is isomorphic to $F_7$. Proposition~\ref{pginflater} now implies that $M$ is isomorphic to $P_r$.

\begin{figure}
\centering
\begin{tikzpicture}[style=thick]

\draw[fill=black] (-3.5,2.5) circle (0.1);
\draw (-3.55,2.5) node[left] {1};
\draw[fill=black] (-1.75,3.75) circle (0.1);
\draw (-1.75,3.75) node[above left] {2};
\draw[fill=black] (1.75,3.75) circle (0.1);
\draw (1.75,3.75) node[above right] {3};
\draw[fill=black] (3.5,2.5) circle (0.1);
\draw (3.55,2.5) node[right] {4};
\draw[fill=black] (-1.75,2.25) circle (0.1);
\draw (-1.75,2.25) node [below left] {5};
\draw[fill=black] (-1.15,3.16) circle (0.1);
\draw (-1.2,3.55) node {6};
\draw[fill=black] (1.15,3.16) circle (0.1);
\draw (1.25,2.8) node {7};

\draw (-5,4.5)--(0,5.5)--(5,4.5)--(5,0.5)--(0,1.5)--(-5,0.5)--(-5,4.5);
\draw (0,1.5)--(0,5.5);
\draw (-3.5,2.5)--(0,5) (-3.5,2.5)--(0,2);
\draw (3.5,2.5)--(0,5);
\draw (-3.5,2.5)--(0,3.5)--(3.5,2.5);
\draw (1.75,3.75)--(0,2)--(-1.75,3.75);
\draw (-1.75,2.25)--(0,5);
\draw (-1.75,3.75).. controls (-2,1.75) ..(0,3.5);

\end{tikzpicture}
\caption{The matroid $F_7^\ast$.}
\label{f7star}
\end{figure}
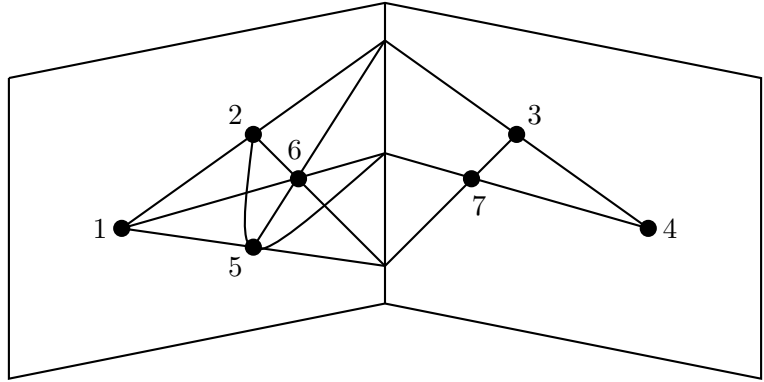

Next, suppose $N\cong M^\ast(K_{3,3})$. The complement of $N$ in $P_4$ is $U_{2,3}\oplus U_{2,3}$; let $x$ be an element of this complement. The elementary quotient of $N$ obtained by extending $N$ by $x$ and then contracting $x$ is shown in Figure~\ref{mk33quo}. The three pairwise-skew 2-element circuits of this quotient correspond to three lines in the extension of $N$ by $x$ where the union of these lines has rank four. Thus $N$ contains a theta-graph that is completed by $x$. It follows that $x$ and, symmetrically, every point in the complement of $N$ in $P_4$, belongs to $\Theta(N)$. Lemma~\ref{pglift} now implies that $\Theta(N)\cong P_4$.

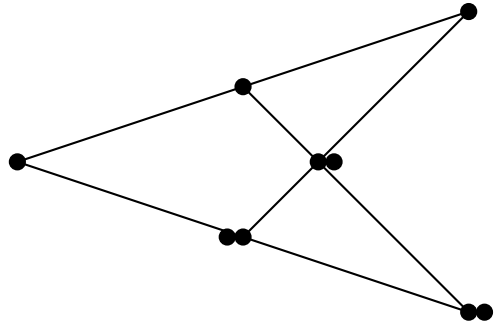
\begin{figure}
\begin{tikzpicture}[style=thick]

\draw (-3,0)--(3,2);
\draw (-3,0)--(3,-2);
\draw (0,1)--(3,-2);
\draw (0,-1)--(3,2);

\draw[fill=black] (-3,0) circle (0.1);
\draw[fill=black] (0,1) circle (0.1);
\draw[fill=black] (0,-1) circle (0.1);
\draw[fill=black] (3,2) circle (0.1);
\draw[fill=black] (3,-2) circle (0.1);
\draw[fill=black] (1,0) circle (0.1);

\draw[fill=black] (1.21,0) circle (0.1);
\draw[fill=black] (-0.21,-1) circle (0.1);
\draw[fill=black] (3.21,-2) circle (0.1);

\end{tikzpicture}
\caption{A quotient of $M^\ast(K_{3,3})$.}
\label{mk33quo}
\end{figure}

\begin{figure}
\centering
\begin{tikzpicture}[style=thick]

\foreach \x in {0,1,2,3,4} \draw (90+72*\x:2)--(162+72*\x:2);
\draw (90:2)--(234:2)--(18:2)--(162:2)--(306:2)--(90:2);

\foreach \x in {0,1,2,3,4} \draw[fill=white] (90 + 72*\x:2) circle (0.1);

\draw (-90:1.9) node {6};
\draw (198:1.9) node {3};
\draw (185:1.3) node {2};
\draw (139:1.3) node {1};
\draw (126:1.9) node {7};
\draw (113:1.3) node {8};
\draw (67:1.3) node {9};
\draw (54:1.9) node {0};
\draw (-5:1.3) node {4};
\draw (-18:1.9) node {5};

\end{tikzpicture}
\caption{The graph $K_5$.}
\label{k5}
\end{figure}
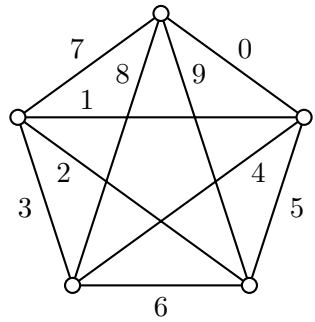
\begin{figure}
\begin{align*}
\begin{tabular}{rrrrrrrrrr}
1 & 2 & 3 & 4 & 5 & 6 & 7 & 8 & 9 & 0
\end{tabular}\hspace{0.25cm}\\
\left[\begin{array}{rrrrrr|rrrr}
1 & 0 & 0 & 0 & 0 & 0 & 1 & 0 & 0 & 1\\
0 & 1 & 0 & 0 & 0 & 0 & 1 & 0 & 1 & 0\\
0 & 0 & 1 & 0 & 0 & 0 & 1 & 1 & 0 & 0\\
0 & 0 & 0 & 1 & 0 & 0 & 0 & 1 & 0 & 1\\
0 & 0 & 0 & 0 & 1 & 0 & 0 & 0 & 1 & 1\\
0 & 0 & 0 & 0 & 0 & 1 & 0 & 1 & 1 & 0
\end{array}\right]
\end{align*}
\caption{A binary representation of $M^\ast(K_5)$.}
\label{m*k5}
\end{figure}

Now suppose $N\cong M^\ast(K_5)$, where $K_5$ is labelled as in Figure~\ref{k5}. Figure~\ref{m*k5} gives a corresponding binary representation of $N$. The dual of a theta-graph with arcs of size at least two is a triangle with no trivial parallel classes. Therefore, we can detect theta-graph restrictions of $N$ by contracting elements of $N^\ast$ to produce such a triangle. For example, $N^\ast/5,8$ is the dual of a theta-graph in $N$ with arcs $\{1,2\}$, $\{3,7\}$, and $\{0,4,6,9\}$. This theta-graph is completed by the element $[1\ 1\ 0\ 0\ 0\ 0]^T$, so this element belongs to $\Theta(N)$. The following is a list of duals of theta-graphs of $N$ and the corresponding elements of $\Theta(N)$ that they produce using this reasoning.

\begin{itemize}
\item $N^\ast/5,8$ gives $[1\ 1\ 0\ 0\ 0\ 0]^T\in \Theta(N)$.
\item $N^\ast/4,9$ gives $[1\ 0\ 1\ 0\ 0\ 0]^T\in \Theta(N)$.
\item $N^\ast/3,9$ gives $[1\ 0\ 0\ 1\ 0\ 0]^T\in \Theta(N)$.
\item $N^\ast/2,8$ gives $[1\ 0\ 0\ 0\ 1\ 0]^T\in \Theta(N)$.
\item $N^\ast/0,6$ gives $[0\ 1\ 1\ 0\ 0\ 0]^T\in \Theta(N)$.
\item $N^\ast/1,8$ gives $[0\ 1\ 0\ 0\ 1\ 0]^T\in \Theta(N)$.
\item $N^\ast/0,3$ gives $[0\ 1\ 0\ 0\ 0\ 1]^T\in \Theta(N)$.
\item $N^\ast/1,9$ gives $[0\ 0\ 1\ 1\ 0\ 0]^T\in \Theta(N)$.
\item $N^\ast/0,2$ gives $[0\ 0\ 1\ 0\ 0\ 1]^T\in \Theta(N)$.
\item $N^\ast/6,7$ gives $[0\ 0\ 0\ 1\ 1\ 0]^T\in \Theta(N)$.
\item $N^\ast/5,7$ gives $[0\ 0\ 0\ 1\ 0\ 1]^T\in \Theta(N)$.
\item $N^\ast/4,7$ gives $[0\ 0\ 0\ 0\ 1\ 1]^T\in \Theta(N)$.
\end{itemize}

It is now straightforward to find theta-graphs in $\Theta(N)$ that are completed by the elements $[1\ 0\ 0\ 0\ 0\ 1]^T$, $[0\ 1\ 0\ 1\ 0\ 0]^T$, and $[0\ 0\ 1\ 0\ 1\ 0]^T$, so $\Theta(N)$ contains every vector of weight 1 or 2. Thus $\Theta(N)$ properly contains $M(K_7)$, so, by Proposition~\ref{mkorpgprop}, $\Theta(N)\cong P_6$.
\end{proof}

\section{The Main Result}
\label{main}

After a review of canonical tree decompositions, this section proves Theorem~\ref{mainresult}. For a set $\{M_1,M_2,\dots,M_n\}$ of matroids, a \emph{matroid-labelled tree} with vertex set $\{M_1,M_2,\dots,M_n\}$ is a tree $T$ such that
\begin{enumerate}
\item[(i)]{if $e$ is an edge of $T$ with endpoints $M_i$ and $M_j$, then $E(M_i)\cap E(M_j)=\{e\}$, and $\{e\}$ is not a separator of $M_i$ or $M_j$; and}
\item[(ii)]{$E(M_i)\cap E(M_j)$ is empty if $M_i$ and $M_j$ are non-adjacent.}
\end{enumerate}
The matroids $M_1, M_2,\dots,M_n$ are called the \emph{vertex labels} of $T$. Now suppose $e$ is an edge of $T$ with endpoints $M_i$ and $M_j$. We obtain a new matroid-labelled tree $T/e$ by contracting $e$ and relabelling the resulting vertex with $M_i\oplus_2 M_j$. As the matroid operation of 2-sum is associative, $T/X$ is well defined for all subsets $X$ of $E(T)$.

Let $T$ be a matroid-labelled tree for which $V(T)=\{M_1,M_2,\dots,M_n\}$ and\\
$E(T)=\{e_1,e_2,\dots,e_{n-1}\}$. Then $T$ is a \emph{tree decomposition} of a connected matroid $M$ if
\begin{enumerate}
\item[(i)]{$E(M)=(E(M_1)\cup E(M_2)\cup\cdots\cup E(M_n))-\{e_1,e_2,\dots,e_{n-1}\}$;}
\item[(ii)]{$\vert E(M_i)\vert\geq 3$ for all $i$ unless $\vert E(M)\vert<3$, in which case $n=1$ and $M=M_1$; and}
\item[(iii)]{$M$ labels the single vertex of $T/E(T)$.}
\end{enumerate}
In this case, the elements $\{e_1,e_2,\dots,e_{n-1}\}$ are the \emph{edge labels} of $T$. Cunningham and Edmonds~\cite{cunned} (see also~\cite[Theorem 8.3.10]{oxley}) proved the next theorem that says that $M$ has a \emph{canonical tree decomposition}, unique to within relabelling of the edges.

\begin{theorem}
\label{treedecomp}
Let $M$ be a $2$-connected matroid. Then $M$ has a tree decomposition $T$ in which every vertex label is $3$-connected, a circuit, or a cocircuit, and there are no two adjacent vertices that are both labelled by circuits or are both labelled by cocircuits. Moreover, $T$ is unique to within relabelling of its edges.
\end{theorem}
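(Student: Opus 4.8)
The plan is to prove existence by induction on $|E(M)|$ and then to prove uniqueness by analysing how the $2$-separations of $M$ sit together.

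\textbf{Existence.} If $M$ is $3$-connected, a circuit, or a cocircuit, the one-vertex tree labelled by $M$ suffices, so assume otherwise. Then $M$ has a $2$-separation $(X,Y)$, and since $M$ is connected we may write $M=M_1\oplus_2 M_2$ with $E(M_1)=X\cup p$ and $E(M_2)=Y\cup p$ for a new basepoint $p$; here $|E(M_i)|<|E(M)|$ and, after discarding trivial small cases, $|E(M_i)|\geq 3$. Applying the inductive hypothesis to each $M_i$ and joining the two trees by an edge labelled $p$ between the vertices whose labels contain $p$ gives a tree decomposition of $M$ into $3$-connected matroids, circuits, and cocircuits; the requirement that a basepoint never be a separator of a $2$-sum keeps condition~(i) of a matroid-labelled tree intact. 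To remove adjacent like-typed circuit (respectively cocircuit) vertices, use the fact that the $2$-sum of two circuits along their common basepoint is again a circuit (dually for cocircuits): contracting such an edge and relabelling strictly decreases the number of vertices. Repeating until no such adjacency remains yields the required decomposition.

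\textbf{Uniqueness: set-up.} Each edge $e$ of a tree decomposition $T$ induces a partition of $E(M)$ according to the two components of $T-e$, and one checks, using associativity of $2$-sum, that this partition is a $2$-separation of $M$; call these the \emph{displayed} $2$-separations of $T$. The uniqueness claim amounts to showing that the family of displayed $2$-separations, together with the location of the $3$-connected pieces, is determined by $M$ alone. The decisive structural fact concerns \emph{crossing} $2$-separations: if $(A_1,B_1)$ and $(A_2,B_2)$ are $2$-separations of $M$ with all four of $A_1\cap A_2$, $A_1\cap B_2$, $B_1\cap A_2$, $B_1\cap B_2$ nonempty, then submodularity of the connectivity function forces equality throughout the relevant inequalities, and a short argument shows that the restriction or contraction of $M$ to the union of the four regions is a circuit or a cocircuit. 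Hence the $2$-separations fall into \emph{crossing classes}, each captured by a single circuit or cocircuit vertex, while the $2$-separations that cross nothing new correspond to edges incident with $3$-connected vertices.

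\textbf{Uniqueness: conclusion and main obstacle.} Given two tree decompositions $T$ and $T'$ satisfying the hypotheses, the crossing analysis shows they display the same $2$-separations: an edge of $T$ not displayed by $T'$ would produce a $2$-separation that either crosses one displayed by $T'$ inside a $3$-connected vertex label, contradicting $3$-connectivity of that label, or sits inside a circuit or cocircuit vertex and hence is already displayed. The crossing classes, being intrinsic to $M$, are grouped identically in both trees, and the "no two adjacent like-typed vertices" condition forces this grouping to be maximal, hence unique. The only remaining freedom is the choice of basepoints, i.e.\ the edge labels, giving uniqueness to within relabelling of edges. I expect the main obstacle to be the crossing lemma and its iteration: showing that a single pair of crossing $2$-separations forces a circuit or cocircuit locally, and then bootstrapping this so that an entire crossing class lies inside one circuit or cocircuit, requires careful bookkeeping with local connectivity and the submodular inequality — the matroid counterpart of the fact that in graph connectivity crossing cuts become "uncrossable" only by exposing very rigid local structure.
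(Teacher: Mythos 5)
First, a point of orientation: the paper does not prove this statement at all --- it is the Cunningham--Edmonds canonical tree decomposition theorem, quoted with citations to Cunningham's thesis and to Theorem 8.3.10 of Oxley's book --- so there is no in-paper proof to compare against. Judged on its own, your existence half is essentially the standard argument and is sound in outline: split along a $2$-separation, apply induction to the two $2$-sum factors (each is $2$-connected, has fewer elements, and has at least three elements because both sides of a $2$-separation have at least two elements), join the two trees by an edge labelled by the basepoint, and merge adjacent circuit--circuit or cocircuit--cocircuit vertices using the fact that a $2$-sum of two circuits is a circuit, and dually.

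The uniqueness half, however, has a genuine gap, and you have partly located it yourself. Your ``decisive structural fact'' is not correct as stated: the four corners $A_1\cap A_2$, $A_1\cap B_2$, $B_1\cap A_2$, $B_1\cap B_2$ cover all of $E(M)$, so ``the restriction or contraction of $M$ to the union of the four regions is a circuit or a cocircuit'' cannot be the intended statement. The true assertion concerns the shape of the $2$-sum decomposition --- crossing $2$-separations force a circuit or cocircuit ``hub'' across which they are arranged --- and proving this, together with the bootstrap that an entire crossing class of $2$-separations is governed by a single circuit or cocircuit vertex, is exactly the hard content of the theorem; in your write-up it is asserted rather than proved. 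The concluding step is also circular in one place: to rule out an edge of $T$ not displayed by $T'$ you say its $2$-separation either crosses one displayed by $T'$ inside a $3$-connected label or ``sits inside a circuit or cocircuit vertex and hence is already displayed,'' but the $2$-separations associated with a circuit or cocircuit vertex of degree at least three are generally \emph{not} displayed by any single edge (they correspond to partitions of that vertex's neighbours), so ``belongs to the crossing class of a circuit/cocircuit vertex'' and ``is displayed by an edge of $T'$'' are different things, and the needed correspondence between the edges and vertices of $T$ and $T'$ does not yet follow. As written, the uniqueness argument is a plan rather than a proof; completing it requires the full uncrossing analysis (or one simply cites Cunningham--Edmonds, as the paper does).
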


We now complete the proof of our main result.

\begin{proof}[Proof of Theorem~\ref{mainresult}]
Since circuits, cycle matroids of complete graphs, and projective geometries are in $\Theta_3$, by Proposition~\ref{parconnprop}, every matroid that can be built from such matroids by a sequence of parallel connections is in $\Theta_3$.

To prove the converse, we begin by noting that loops can be added via direct sums and that parallel elements can be added via parallel connections of circuits, so we may assume $M$ is simple. Let $T$ be the canonical tree decomposition of $M$. The proof is by induction on $\vert V(T)\vert$.

If $\vert V(T)\vert = 1$, then $M$ is 3-connected and the result holds by Theorem~\ref{mainres3conn}. Now assume $T$ has at least two vertices, and let $N$ be a matroid labelling a leaf of $T$. Since $M$ is simple, $N$ is not a cocircuit. We may now write $M=N\oplus_2 M_1$, where, by Corollary~\ref{2summands}, $N$ and $M_1$ are in $\Theta_3$. Thus, by Theorem~\ref{mainres3conn}, $N$ is a circuit, the cycle matroid of a complete graph of rank at least three, or a projective geometry of rank at least three. Moreover, by induction, $M_1$ is a parallel connection of circuits, cycle matroids of complete graphs, and projective geometries.

Let $N_1$ be the label of the neighbor of $N$ in $T$, and suppose $N_1$ is not a cocircuit. In this case, each of $N$ and $N_1$ is a circuit, the cycle matroid of a complete graph of rank at least three, or a projective geometry of rank at least three, and they are not both circuits. Therefore, if $p$ is the basepoint of the 2-sum $N\oplus_2 N_1$, there are circuits in $N$ and $N_1$ that form a theta-graph that is completed by $p$, a contradiction. Thus $N_1$ is a cocircuit.

Now let $k$ be the degree of $N_1$ in $T$. Evidently $N_1$ has at least $k$ elements, but, since $M$ is simple, $N_1$ has at most $k+1$ elements. If $k=2$, then $N_1$ has three elements as $N_1$ labels a vertex of $T$. Otherwise $k\geq 3$, so there are circuits in $M$ that form a theta-graph that is completed by an element of $N_1$. Hence $N_1$ has $k+1$ elements, and therefore corresponds to a parallel connection of its neighbors. It now follows that $M$ is the parallel connection of circuits, cycle matroids of complete graphs, and projective geometries.
\end{proof}


\section*{Acknowledgements}

The authors thank the referee for suggesting a number of improvements to the paper.

\end{document}